  \theoremstyle{remark}
  \newtheorem{rem}{\protect\remarkname}
  \theoremstyle{definition}
  \newtheorem{defn}{\protect\definitionname}
  \theoremstyle{plain}
  \newtheorem{lem}{\protect\lemmaname}
\theoremstyle{plain}
\newtheorem{thm}{\protect\theoremname}
  \theoremstyle{plain}
  \newtheorem{cor}{\protect\corollaryname}
 \theoremstyle{definition}
  \newtheorem{example}{\protect\examplename}
\newcommand{\R}{\mathbb{R}}
\providecommand{\definitionname}{Definition}
\providecommand{\examplename}{Example}
\providecommand{\lemmaname}{Lemma}
\providecommand{\remarkname}{Remark}
\providecommand{\corollaryname}{Corollary}
\providecommand{\theoremname}{Theorem}
\begin{document}

\title{Attracting Lagrangian Coherent Structures on Riemannian manifolds}
\date{\today}
\author{Daniel Karrasch}
\affiliation{ETH Z\"urich, Institute for Mechanical Systems, Leonhardstrasse 21, 8092 Z\"urich, Switzerland}
\email{karrasch@imes.mavt.ethz.ch}

\begin{abstract}
It is a wide-spread convention to identify repelling Lagrangian Coherent Structures (LCSs)
with ridges of the forward finite-time Lyapunov exponent (FTLE) field, and attracting
LCSs with ridges of the backward FTLE. We show that in two-dimensional incompressible
flows attracting LCSs appear as ridges of the forward FTLE field, too. This raises the issue of characterization of
attracting LCSs from a forward finite-time Lyapunov analysis. To this end,
we extend recent results by Haller \& Sapsis (2011) \onlinecite{Haller2011b}, regarding the
relation between forward and backward maximal and minimal FTLEs, to both
the whole finite-time Lyapunov spectrum and to stretch directions. This
is accomplished by considering the singular value decomposition (SVD)
of the linearized flow map.
By virtue of geometrical insights from the SVD, we give a short and direct
proof of the main result of Farazmand \& Haller (2013) \onlinecite{Farazmand2013},
and prove a new characterization of attracting LCSs in forward time for
Haller's variational approach to hyperbolic LCSs \onlinecite{Haller2011}.
We apply these results to the attracting FTLE ridge of the incompressible saddle flow.
\end{abstract}

\maketitle

\begin{quotation}
Hyperbolic Lagrangian Coherent Structures (LCSs) are material surfaces that
act as cores of mixing patterns in complex, unsteady and finite-time dynamical 
systems through material repulsion or attraction. In practice, repelling LCSs 
are often identified with ridges of the forward finite-time Lyapunov exponent 
(FTLE) field. Attracting LCSs are then defined as repelling LCSs in backward 
time. It is known that material structures experiencing strong shear induce large 
particle separation, and can hence appear as FTLE ridges as well. In the most frequently 
considered case of two-dimensional incompressible flows, we point out that 
also attracting LCSs are characterized by strong tangential particle separation, 
and may hence appear as forward FTLE ridges. We prove characterizations of 
attracting LCSs in forward time, which helps to determine the dynamical 
cause of particle separation.
\end{quotation}

\section{Introduction}

In recent years, there has been an increasing interest in dynamical
systems given on a finite time interval, driven by applications in
geophysical fluid flows, biological models and engineering. Hyperbolic
Lagrangian Coherent Structures (LCSs), i.e., co\-di\-men\-sion-one material
surfaces with locally the strongest normal repulsion or attraction \cite{Haller2000},
have been identified as the key structures governing transient pattern formation,
transport and mixing\cite{Peacock2013}.
As such, they are considered as finite-time analogues to stable and
unstable manifolds of hyperbolic equilibria/trajectories in steady/unsteady
flows admitting time-asymptotic solutions. Many approaches to numerical
LCS detection have been developed, and most of them identify
hyperbolic LCSs with ridges in scalar separation measure fields
such as finite-time Lyapunov exponents (FTLEs)\cite{Haller2001,Shadden2005,Lekien2007}, finite-size Lyapunov
exponents (FSLEs)\cite{Joseph2002}, relative dispersion\cite{Bowman1999}, and finite-time entropy\cite{Froyland2012}.

A relation between hyperbolic LCSs and the FTLE field was first suggested
by Haller \cite{Haller2001}. The guiding in\-tu\-i\-tion \cite{Haller2001,Haller2001a}
was that repelling LCSs should be indicated by curves (or surfaces)
of (locally) maximal values---subsumed by an intuitive notion of \emph{ridges}---of
some separation measure, computed in forward time. In contrast,
attracting LCSs were thought to be indicated by ridges obtained from
a backward-time computation. This intuition has been adopted
for the FTLE \cite{Shadden2005,Lekien2007}, and is also present
in the majority of approaches based on ridges of separation measures.

In this article, we show 
that in the two-dimensional incompressible case,
strongly \emph{attracting} structures necessarily induce
strong particle separation and are hence indicated by forward FTLE ridges.
This is in contradiction to the wide-spread
hypothesis that any forward FTLE ridge corresponds to a repelling flow structure.
There are several conceptual and computational issues related to that hypothesis \cite{Haller2002,Branicki2010,Haller2011}, including an example of an FTLE ridge
induced by strong shear.
To the best of our knowledge, the phenomenon of normally attracting forward FTLE ridges
has been unknown and adds another aspect of mathematical inconsistency to purely ridge-based LCS approaches. After all, FTLE ridges may be induced
by all sorts of dynamical phenomena, ranging from normal attraction
via shear to normal repulsion.

Our example raises the question of how to determine attracting LCSs in a 
forward finite-time Lyapunov analysis. This question has been first considered 
in the context of the geodesic approach to hyperbolic LCSs\cite{Farazmand2013}.
As we show in this work, one possible solution is to include principal directions
in the LCS approach, as provided by the variational theory \cite{Haller2011,Farazmand2012a,Karrasch2012} and the more recent geodesic theory of LCSs \cite{Haller2012,Farazmand2014a,Karrasch2013c}.

In \prettyref{sec:Geometry}, we extend recent results on the relation between 
forward and backward FTLEs\cite{Haller2011b} to both the whole finite-time Lyapunov
spectrum and to associated principal directions. We develop the theory in the 
setting of Riemannian manifolds, which was first considered in Ref.~\onlinecite{Lekien2010} 
and is of interest in large-scale geophysical fluid flow applications. Our analysis
strongly benefits from considering the singular value decomposition (SVD) of 
the deformation gradient tensor. 
Remarkably, even though the SVD is a well-established tool for the computation of
Lyapunov spectra and Lyapunov vectors of time-asymptotic dynamical systems \cite{Greene1987},
its use in the finite-time context has been limited mostly to an alternative computation 
tool of the FTLE.
Our analysis, however, also includes fields of subdominant singular values and singular vectors.
This turns out to be of significant theoretical and computational advantage\cite{Karrasch2013c}.

\prettyref{sec:LCS} is devoted to the characterization of attracting LCSs in
the geodesic \cite{Farazmand2013,Blazevski2014,Farazmand2014a} and the variational
\cite{Haller2011} sense in forward time. In the first case, we provide a short and
direct geometric proof of the main result of Ref.~\onlinecite{Farazmand2013}. 
In the latter, the characterization is new and reveals that in two-dimensional 
incompressible flows, variational attracting LCSs
satisfy ridge-type conditions for the forward FTLE field. We apply both approaches to the
saddle flow discussed in \prettyref{sec:saddle} and show that both methods
detect repelling and attracting LCSs correctly, irrespective of the orientation
of a visual FTLE ridge.

\section{Example: A Nonlinear Incompressible Saddle}\label{sec:saddle}

Consider dynamics around the autonomous, nonlinear, incompressible saddle, described
by the Hamiltonian\cite{Karrasch2013a}
\[
H(x,y)=-L\tanh(q_{1}x)\tanh(q_{2}y).
\]
Here, $L>0$ governs the strength of hyperbolicity, and $q_{1},q_{2}$
localize the saddle behavior. Trajectories are given as solutions of the 
ordinary differential equation
\[
\begin{split}\dot{x} & =\partial_{y}H(x,y)=-Lq_{2}\left(1-\tanh(q_{2}y){}^{2}\right)\tanh\left(q_{1}x\right),\\
\dot{y} & =-\partial_{x}H(x,y)=Lq_{1}\tanh\left(q_{2}y\right)\left(1-\tanh(q_{1}x)^{2}\right).
\end{split}
\]
We set the parameters to $L=2$, $q_{1}=1$ and $q_{2}=0.15$, and
the resulting vector field on $\left[-1,1\right]^{2}$ is shown in
Fig.\ \ref{fig:nlin_saddle}(a). Here, the origin is a hyperbolic steady state, with $x$- and $y$-axis as the classic stable and unstable manifolds, respectively. For any integration time, segments of the $x$-axis centered around the origin are normally repelling, whereas similar segments of the $y$-axis are normally attracting.

\begin{figure}
\centering
{(a)\quad \includegraphics[width=0.6\columnwidth]{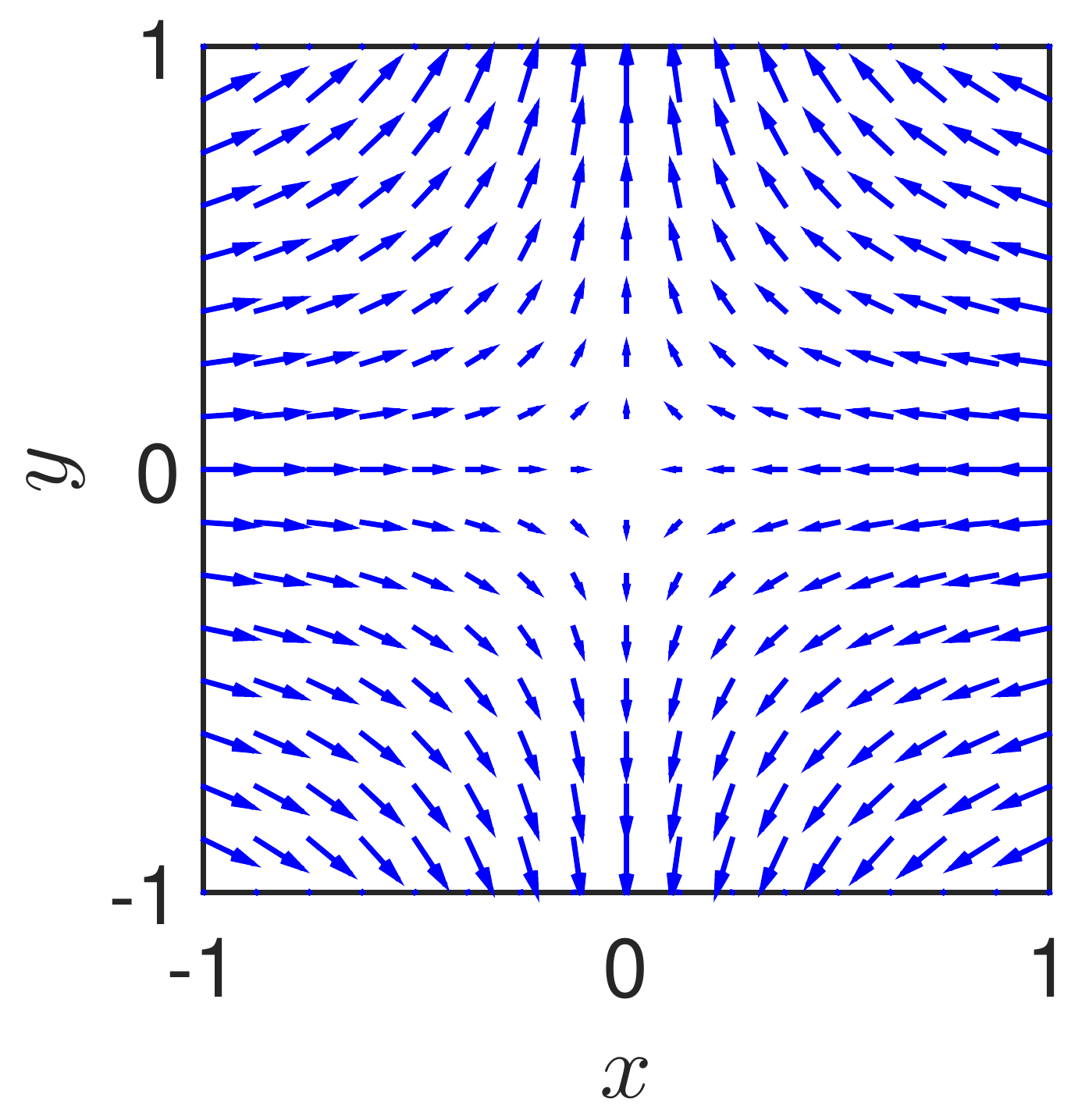}}\\
{(b)\quad \includegraphics[width=0.6\columnwidth]{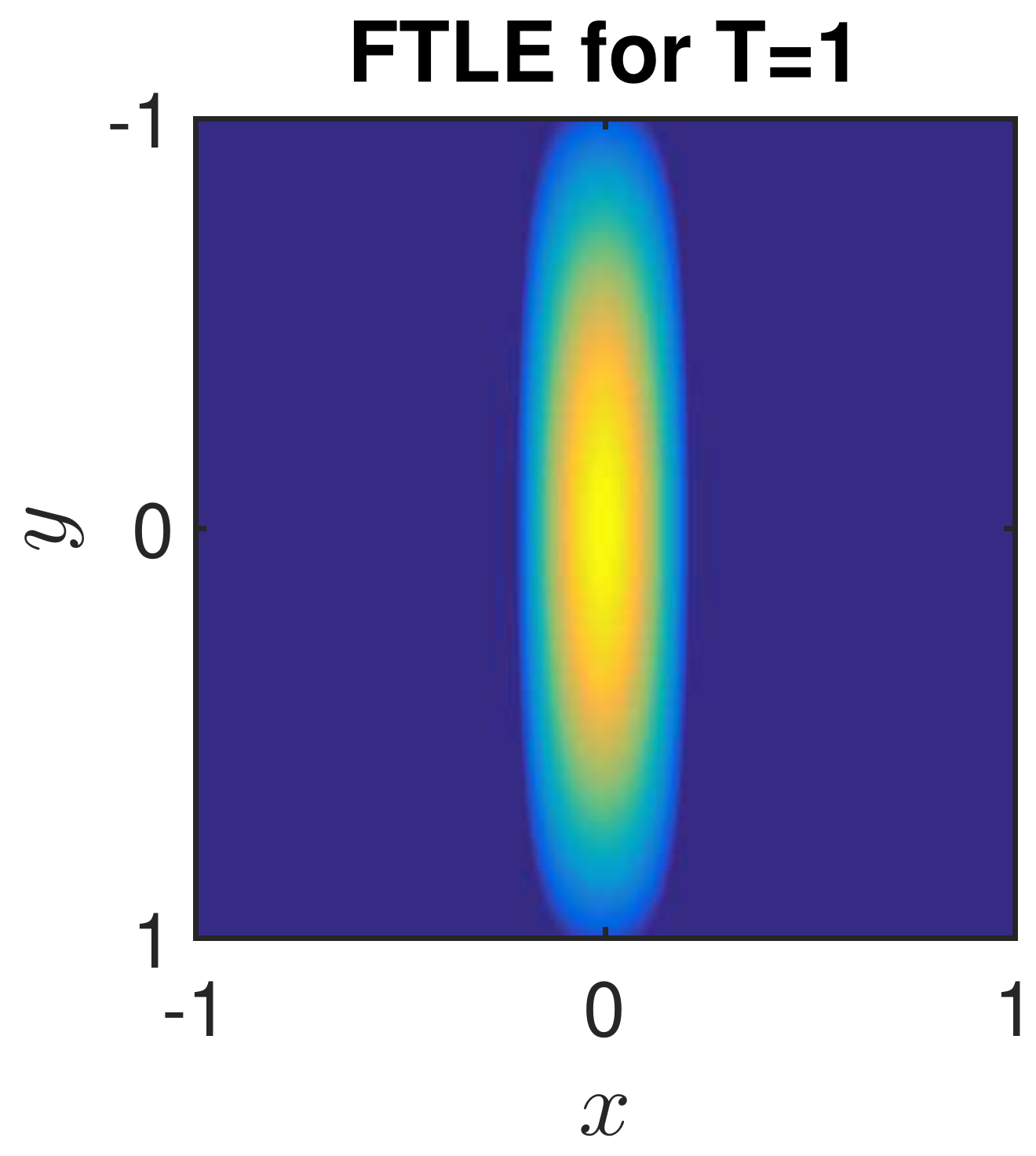}}\\
{(c)\quad \includegraphics[width=0.6\columnwidth]{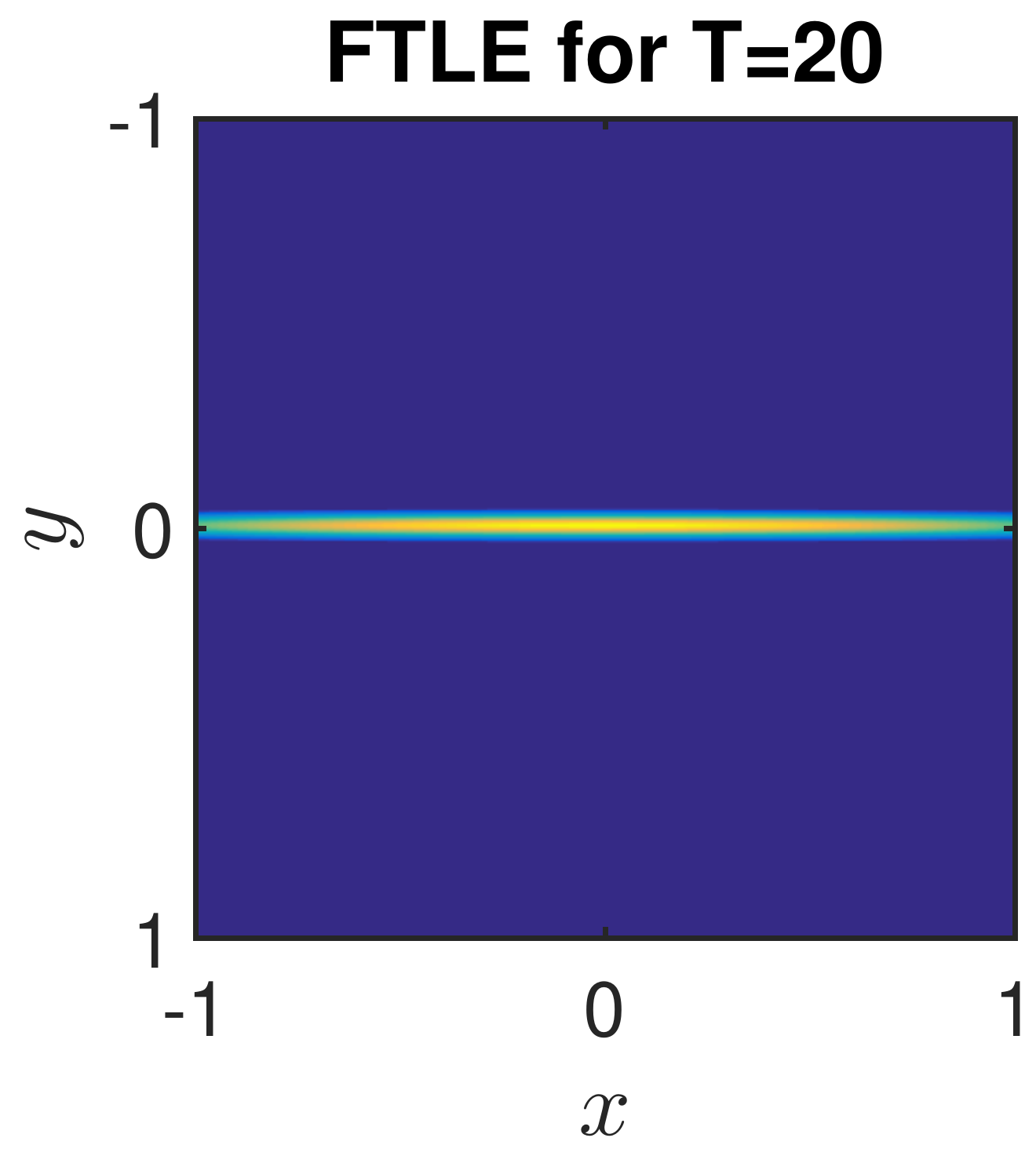}}
\caption{Nonlinear incompressible saddle. (a) Vector field. (b)-(c) FTLE fields for integration time $T=1$ (b) and $T=20$ (c). The FTLE fields are cut off from below at levels of $0.29$ (b) and $0.25$ (c) for visualization purposes.}
\label{fig:nlin_saddle}
\end{figure}

For (forward) integration time $T=1$, we observe that the
$y$-axis appears visually as an FTLE ridge (Fig.\ \ref{fig:nlin_saddle}(b))
and would be commonly misinterpreted as a repelling LCS on the time 
interval $[0,1]$. This is in contradiction to its attracting nature. 
For a longer integration time of $T=20$, the $x$-axis appears visually 
as an FTLE ridge, while the $y$-axis ridge seems to have disappeared, see Fig.\ \ref{fig:nlin_saddle}(c).

For both integration times, however, segments of the respective complimentary axes
can be shown to be FTLE \emph{height ridges} \cite{Eberly1994}.
First, the directional derivatives of
the FTLE field along the $x$- and $y$-axis in the respective normal
direction vanish identically over the whole considered length. Second,
the second-order directional derivatives are negative on $x\in\left[-0.25,0.25\right]$
and $y\in\left[-0.14,0.14\right]$, respectively, see Fig.\ \ref{fig:nlin_saddle_deriv}.
Thus, locally around the origin, repelling and attracting material
lines would be detected by a height ridge extraction algorithm, yielding a \emph{cross of ridges}
around the hyperbolic saddle. We will show later that segments of the normally atttracting
$y$-axis persist as an FTLE ridge for \emph{any} integration time. As we demonstrated, 
for increasing integration time they shrink in length, eventually below grid resolution.

\begin{figure}
\centering
{(a)\quad \includegraphics[width=0.6\columnwidth]{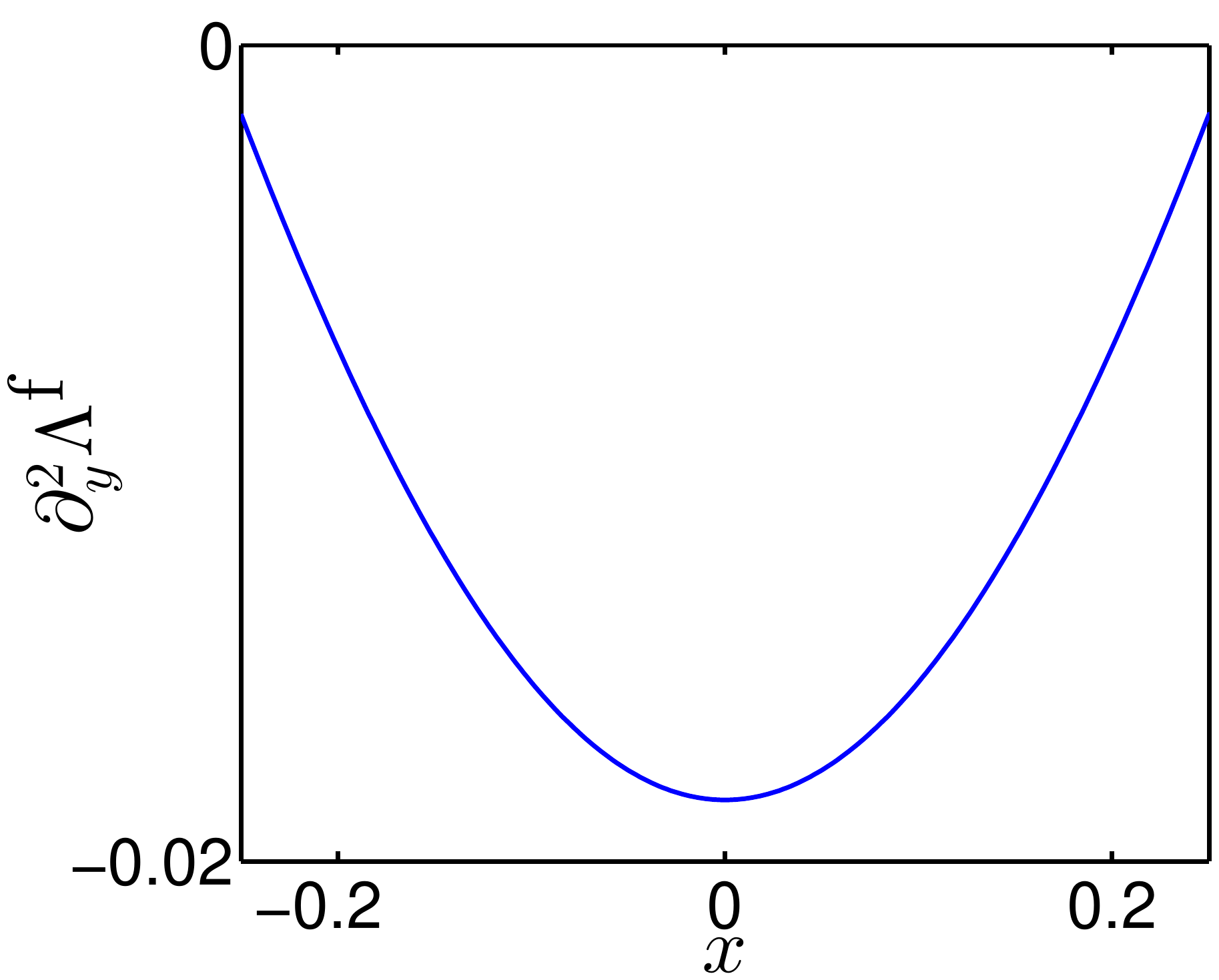}}\\
{(b)\quad \includegraphics[width=0.6\columnwidth]{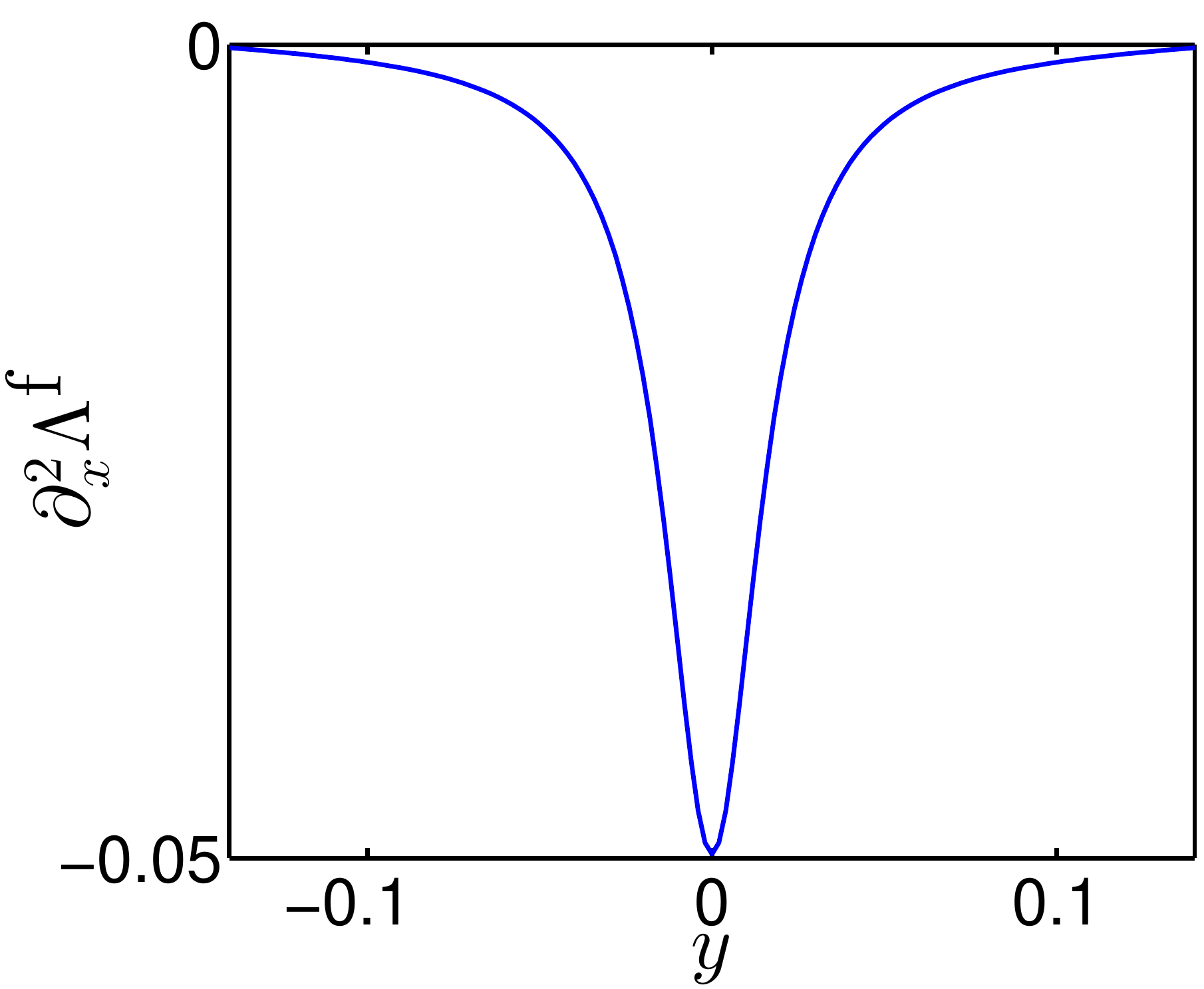}}
\caption{(a) Plot of the second-order derivative w.r.t.\ $y$ of $\Lambda^{\textnormal{f}}$
along the $y=0$ axis for integration time $T=1$. (b) Plot of
the second-order derivative w.r.t.\ $x$ of $\Lambda^{\textnormal{f}}$
along the $x=0$ axis for integration time $T=20$. Both plots show that
also the respective axes which do not show up visually in the FTLE field,
are in fact FTLE (height) ridges, since the first-order derivatives vanish identically.}
\label{fig:nlin_saddle_deriv}
\end{figure}

Based on this example, FTLE fields, which have been computed
over a sufficiently long integration time, seem to indicate
repelling or shearing material structures rather than attracting ones.
In a flow with a priori unknown time scales or simply in a limited data
set, however, it may be unclear whether the chosen/available integration
time suffices to make the appearance of attracting LCSs as FTLE ridges
unlikely. This in turn necessitates additional post-processing.

As indicated by this example and later theoretical results, defining
attracting and repelling LCSs as FTLE
ridges in backward and forward time, respectively, is mathematically
inconsistent. This inconsistency does not restrict to the FTLE field
as one representative of a separation measure.
The reason is that separation
measures do not indicate \emph{directions} of stretching,
but merely the pure fact of stretching. The orientation
of a specified structure with respect to certain directions
is defined only after its determination, and obviously cannot
be obtained from the scalar field a priori. Additionally, in incompressible
two-dimensional vector fields repulsion and attraction balance at
each point, such that attracting structures come along with
strong particle separation and may appear as ridges.

As we show in this work, one way to resolve this observed inconsistency
is to consider directional information
in LCS theory and computations.

\section{The Geometry of Linearized Deformation\label{sec:Geometry}}

In this section, we study finite-time flows on Riemannian manifolds.
Thereby, we adopt the manifold framework of Ref.~\onlinecite{Lekien2010}, where
aspects related to the computation of the scalar FTLE field on
non-Euclidean manifolds are discussed. We keep our presentation 
coordinate-free, and recall coordinate-representation issues in Appendix \ref{sec:representation}.

Let $\mathcal{M}$ be an $n$-dimensional Riemannian manifold, i.e.\ a smooth
manifold with a (Riemannian) metric. We denote the tangent space of $\mathcal{M}$
at $\mathbf{x}\in\mathcal{M}$ by $T_{\mathbf{x}}\mathcal{M}$. The
metric gives rise to an inner product, and hence a norm, on each tangent space, and therefore
induces a notion of angles between and length of tangent vectors.

Consider a nonautonomous $C^1$ velocity field $\mathbf{u}$ on $\mathcal{M}$,
defined on a finite time interval $I\coloneqq\left[t_{1},t_{2}\right]$.
The associated (particle) motion is then given by solutions of the ordinary differential equation
\begin{equation}
\dot{\mathbf{x}}=\mathbf{u}(t,\mathbf{x}).\label{eq:ODE}
\end{equation}
The $t_{1}$-based flow map is denoted by
$\mathbf{F}_{t_{1}}^{t}\colon\mathcal{D}\subseteq\mathcal{M}\to\mathbf{F}_{t_{1}}^{t}[\mathcal{D}]\subseteq\mathcal{M}$
and maps initial values $\mathbf{x}_{1}\in\mathcal{D}$ from time $t_{1}$ to their
position at time $t\in I$ according to the unique solution of \eqref{eq:ODE}
passing through $\mathbf{x}_{1}$ at time $t_{1}$. Recall that the
flow map is continuously differentiable in $\mathbf{x}_{1}$.
In continuum mechanical terms, the flow map can be
thought of as a one-parametric family of \emph{deformations} of the flow domain $\mathcal{D}$ \cite{Truesdell2004}.

Next, we consider the linearization of the flow map, also referred to as the
\emph{deformation gradient (tensor)} in continuum mechanics,
\begin{align*}
D\mathbf{F}_{t_1}^t=\left(\mathbf{F}_{t_{1}}^{t}\right)_*&\colon T_{\mathcal{D}}\mathcal{M}\to T_{\mathbf{F_{t_1}^t}[\mathcal{D}]}\mathcal{M},
\intertext{or pointwise}
D\mathbf{F}_{t_{1}}^{t}(\mathbf{x}_{1})=\left(\mathbf{F}_{t_{1}}^{t}\right)_*(\mathbf{x}_{1})&\colon
T_{\mathbf{x}_1}\mathcal{M}\to T_{\mathbf{F}_{t_1}^t(\mathbf{x}_1)}\mathcal{M},
\end{align*}
for $\mathbf{x}_1\in\mathcal{D}$, where the sub-index $*$ denotes push-forward \cite{Abraham1988}. Hence, the deformation-gradient tensor field is a vector bundle isomorphism over $\mathbf{F}$ \cite{Abraham1988}, see Figs.\ \ref{fig:flow_tangent} and \ref{fig:svd} for illustrations.

In what follows, we study the dynamics from time $t_{1}$ to the final time $t_{2}=t_{1}+T$, and abbreviate $\mathbf{F}\coloneqq\mathbf{F}_{t_{1}}^{t_{2}}$. In this setting,
studying finite-time dynamics reduces to the (linearized) deformation analysis of a single iteration of a diffeomorphism.

\begin{figure}
\centering
\includegraphics[width=0.9\columnwidth]{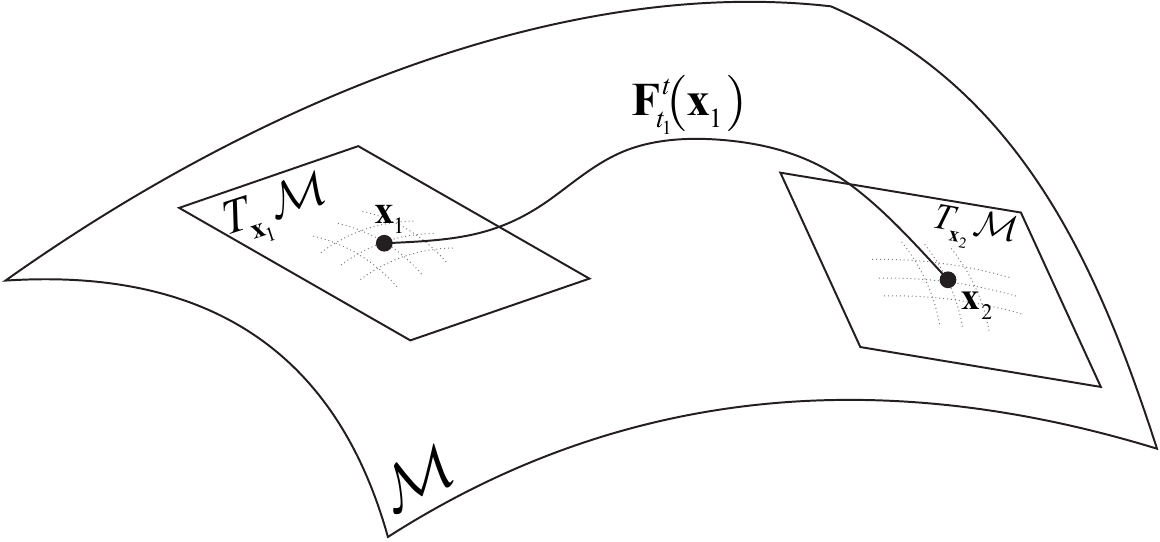}
\caption{The flow geometry.}
\label{fig:flow_tangent}
\end{figure}

\begin{rem}
The fact that the deformation gradient is pointwise a mapping between two different vector spaces precludes formally to pose an eigenvalue problem for $D\mathbf{F}(\mathbf{x}_1)$. When passing to a matrix representation of $D\mathbf{F}(\mathbf{x}_1)$, the solution of the associated eigenvalue problem on $\mathbb{R}^n$ depends on the choice of bases in the two tangent spaces with respect to which the matrix representation is computed. This phenomenon is also paraphrased as \emph{lack of objectivity/frame-invariance}\cite{Mezic2010}.
\end{rem}

The linearized deformation effect of the flow map $\mathbf{F}$ is best studied
with the \emph{singular value decomposition (SVD)} of $D\mathbf{F}$, i.e.,
\[
D\mathbf{F}=\Theta\Sigma\Xi^{\top}.
\]
Here, $\Xi\colon T_\mathcal{D}\mathcal{M}\to T_\mathcal{D}\mathcal{M}$ and $\Theta\colon T_{\mathbf{F}[\mathcal{D}]}\mathcal{M}\to T_{\mathbf{F}[\mathcal{D}]}\mathcal{M}$ are pointwise orthogonal vector bundle homomorphisms, $\Xi^\top$ denotes the pointwise adjoint of $\Xi$ with respect to the metric, and $\Sigma\colon T_\mathcal{D}\mathcal{M}\to T_{\mathbf{F}[\mathcal{D}]}\mathcal{M}$ is a vector bundle isomorphism over $\mathbf{F}$. In orthonormal coordinates, $\Xi$ and $\Theta$ are represented by orthogonal matrices. The columns of $\Xi$, denoted by $\xi_{n},\ldots,\xi_{1}$, are called \emph{right-singular vectors} of $D\mathbf{F}$, and the columns of $\Theta$,  denoted by $\theta_{n},\ldots,\theta_{1}$, are called \emph{left-singular vectors} of $D\mathbf{F}$. Right- and left-singular vectors form pointwise orthonormal bases of the corresponding tangent space. With respect to these bases, $\Sigma$ is represented pointwise by a diagonal matrix with positive entries $\lVert D\mathbf{F}\rVert=\sigma_{n}\geq\ldots\geq\sigma_{1}>0$,
called the \emph{singular values} of $D\mathbf{F}$. In continuum mechanics, one also refers to them as \emph{stretch ratios} (or \emph{principal stretches}) and to the $\xi_{i}$'s and $\theta_{j}$'s as \emph{principal directions} \cite{Truesdell2004}. Note, that we changed the order of indices compared to the usual SVD notation for consistency
with literature related to finite-time Lyapunov analysis, see below.

It is worth recalling that the SVD is well-defined in the following sense, see, e.g.,  Ref.~\onlinecite[Thm.\ 4.1]{Trefethen1997}: requiring
an ordering of the diagonal entries as above makes $\Sigma$ uniquely
defined. Accordingly, an ordering of the normalized singular vectors
is induced, which in turn are uniquely defined up to direction if
the associated singular value is simple. Throughout this work, we
assume the SVD of $D\mathbf{F}$ to be well-defined, and restrict,
if necessary, our analysis to the open subset of the domain in which
this holds true. This assumption guarantees the existence of two sets of (locally) smooth
vector fields: the right-singular vector fields on $\mathcal{D}$, and the left-singular vector fields on $\mathbf{F}[\mathcal{D}]$. We choose to consider the scalar singular value fields $\sigma_{i}$
to be defined on $\mathcal{D}$.

It is readily seen that right- and left-singular
vectors are eigenvectors of right and left \emph{Cauchy--Green strain tensors},
respectively, which are defined as
\begin{equation}\label{eq:CG}
\begin{split}
\mathbf{C} & =\left(D\mathbf{F}\right)^\top D\mathbf{F}=\Xi\Sigma^{2}\Xi^\top,\\
\mathbf{B} & =D\mathbf{F}\left(D\mathbf{F}\right)^\top =\Theta\Sigma^{2}\Theta^\top.
\end{split}
\end{equation}

In contrast to $D\mathbf{F}$, the right and left
Cauchy--Green strain tensor fields are tangent bundle isomorphisms
with base spaces $\mathcal{D}$ and $\mathbf{F}[\mathcal{D}]$, respectively. Therefore, one can
consider fiberwise eigenvalue problems, and Eq.\ \eqref{eq:CG} shows that
right and left Cauchy--Green strain tensors, evaluated at $\mathbf{x}_1$ and $\mathbf{F}(\mathbf{x}_1)$, respectively, have the same eigenvalues
$\lambda_{n}\geq\ldots\geq\lambda_{1}>0$. They are related to the
singular values of $D\mathbf{F}$ via $\lambda_{i}=\sigma_{i}^{2}$.

The SVD can be reformulated
in differential geometric notation as
\begin{equation}
\mathbf{F}_{*}\xi_{i}=\left(\mathbf{F}_{*}\sigma_{i}\right)\theta_{i}.\label{eq:F-relation}
\end{equation}
One also says that $\theta_{i}$ and $\xi_{i}$
are \emph{$\mathbf{F}$-related}, up to the non-vanishing, positive
coefficient function $\sigma_{i}$. Eq.\ \eqref{eq:F-relation} states that the $\xi_i$ are mapped by $D\mathbf{F}$ onto $\theta_i$ and
thereby stretched by $\sigma_i$, see Fig.\ \ref{fig:svd}.

\begin{figure}
\centering
\includegraphics[width=0.9\columnwidth]{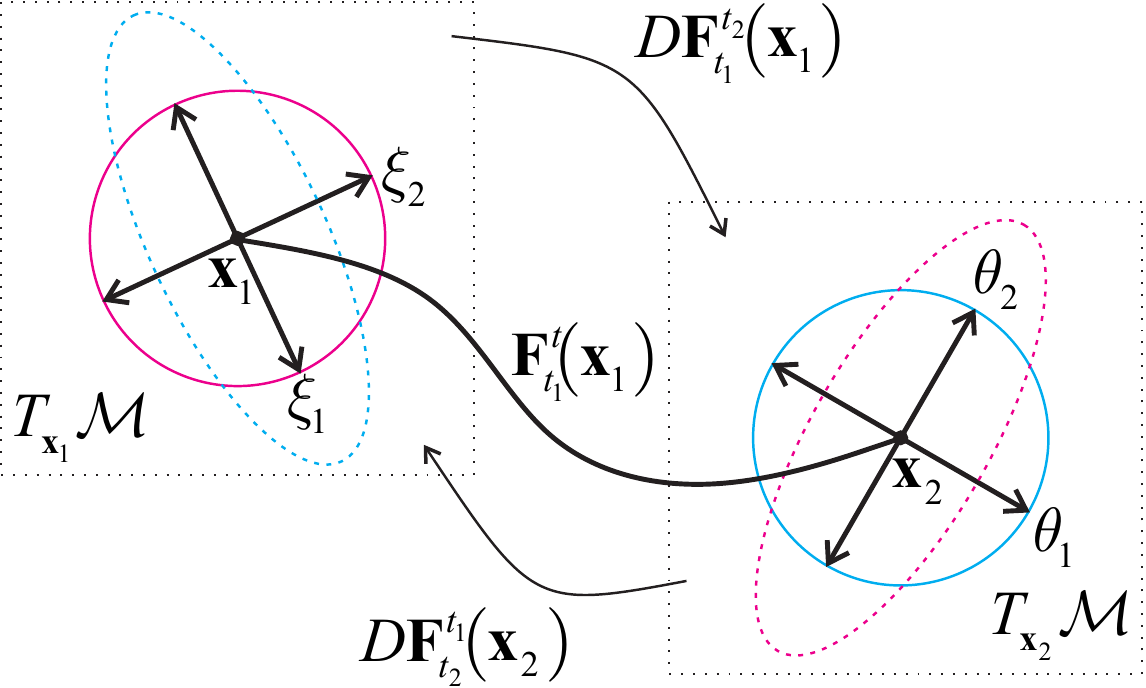}
\caption{The local geometry of linearized deformation in the two-dimensional incompressible case. Solid unit circles are mapped to dashed ellipses of the corresponding color by $D\mathbf{F}$ and $D\mathbf{F}^{-1}$, resp. The black curve connecting $\mathbf{x}_1$ and $\mathbf{x}_2$ represents the trajectory of $\mathbf{x}_1$ under $\mathbf{F}_{t_1}^t$. Due to incompressibility, see Eqs.\ \eqref{eq:svd_incompressible} and \eqref{eq:bw-F-relation}, the lengths of the major and minor semi axes equal each other, respectively.}
\label{fig:svd}
\end{figure}

In words, $\xi_{n}\left(\mathbf{x}_{1}\right)$ and $\xi_{1}\left(\mathbf{x}_{1}\right)$
are the displacements around $\mathbf{x}_{1}$ of, respectively, largest
and smallest stretching under the linearized flow $D\mathbf{F}(\mathbf{x}_{1})$.
At some point $\mathbf{x}_{1}$, the rate of largest stretching over
the time interval $\left[t_{1},t_{2}\right]$ of length $T$ is defined
as
\begin{multline}\label{eq:FTLE}
\Lambda^{\text{f}}\left(\mathbf{x}_{1}\right)\coloneqq\frac{1}{T}\log\lVert D\mathbf{F}\left(\mathbf{x}_{1}\right)\rVert=\\
\frac{1}{T}\log\sigma_{n}\left(\mathbf{x}_{1}\right)=\frac{1}{2T}\log\lambda_{n}\left(\mathbf{x}_{1}\right),
\end{multline}
and is referred to as \emph{(forward) finite-time Lyapunov exponent
(FTLE)}. If $\mathbf{u}$ is incompressible, then the associated flow is volume-preserving, i.e., $\det\left(D\mathbf{F}\right)=\prod_{i=1}^{n}\sigma_{i}=1$,
and therefore $\Lambda^{\text{f}}\geq0$; see, e.g., Ref.~\onlinecite[Proposition 14.20]{Lee2012},
for the precise meaning of $\det(D\mathbf{F})$ in the manifold framework. For two-dimensional incompressible
flows we have in particular
\begin{align}
\sigma_{2} & =\sigma_{1}^{-1}.\label{eq:svd_incompressible}
\end{align}
Here and in the following, the superscript $-1$ at singular value fields
denotes the pointwise reciprocal.

\begin{rem}\label{rem:svd}
It is more common to introduce the FTLE and the principal directions
via the \emph{eigendecomposition} of $\mathbf{C}$ than via the SVD
of $D\mathbf{F}$; see Refs.~\onlinecite{Pobitzer2012,Ma2014} for SVD-based
approaches with a different focus. The SVD-approach has both theoretical
and numerical advantages. First, $D\mathbf{F}$
is the natural object of study and its SVD gives the initial displacements of 
strongest and weakest stretching, together with their final displacements,
see Fig.\ \ref{fig:svd} and \prettyref{rem:svd-bw} for another theoretical aspect.
One numerical advantage is that
the implementation of the SVD guarantees non-negative singular values
and an orthogonal set of right- and left-singular vectors, in contrast to 
general eigendecomposition algorithms. Finally, SVD algorithms
are less sensitive to perturbations than eigenvalue problem solvers\cite{Trefethen1997}.
\end{rem}

As mentioned in the Introduction, the flow map is also considered under backward integration,
either from $t_{1}$ to $t_{1}-T$, or from $t_{1}+T$ to $t_{1}$.
To keep our analysis related to the same time interval $\left[t_{1},t_{2}\right]\coloneqq\left[t_{1},t_{1}+T\right]$\footnote{See also the discussion in Ref.\ \onlinecite{Farazmand2013}},
we continue with our previous choice. The linearized deformation under the backward
flow is then given by
\begin{equation}
D\mathbf{F}^{-1}=\Xi\Sigma^{-1}\Theta^{\top},\label{eq:svd_inverse}
\end{equation}
where we have used $\left(D\mathbf{F}\right)^{-1}=D\mathbf{F}^{-1}$
by the chain rule and the orthogonality of $\Xi$ and $\Theta$. Introducing backward singular value fields $\kappa_n\geq\ldots\geq\kappa_1>0$,
defined on $\mathbf{F}[\mathcal{D}]$, we observe from Eq.\ \eqref{eq:svd_inverse} that the $\theta_1,\ldots,\theta_n$, obtained from the SVD of the forward $D\mathbf{F}$, are exactly the backward principal directions. Furthermore, we have the relations
\begin{equation}
\begin{split}\label{eq:bw-F-relation}
\left(\mathbf{F}^{-1}\right)_{*}\kappa_{n+1-j} &= \mathbf{F}^*\kappa_{n+1-j}=\sigma_{j}^{-1},\\
\left(\mathbf{F}^{-1}\right)_{*}\theta_{j} &= \mathbf{F}^{*}\theta_{j}=\mathbf{F}^*\kappa_{n+1-j}\xi_{j}=\sigma_{j}^{-1}\xi_{j},
\end{split}
\end{equation}
where $\mathbf{F}^{*}$ denotes the pullback by $\mathbf{F}$ \cite{Abraham1988}. Eqs.\ \eqref{eq:bw-F-relation} relate all forward and backward principal stretches and directions to each other. Note the reversed ordering of $\theta$-indices compared to the $\kappa$-indices,
which is due to the fact that we keep the index order induced from the forward dynamics,
see Fig.\ \ref{fig:svd}.

\begin{rem}\label{rem:svd-bw}
In addition to the well-known advantages of SVD mentioned in \prettyref{rem:svd},
another---seemingly overlooked---benefit from introducing deformation terms via
the SVD of $D\mathbf{F}$ is that we obtain forward \emph{and} backward stretch
information from a single forward \emph{or} backward time flow computation.
This fact has been exploited recently for efficient and accurate LCS computations and tracking\cite{Karrasch2013c}.
\end{rem}

As a consequence, the backward FTLE is
\begin{multline}
\Lambda^{\text{b}}\left(\mathbf{x}_{2}\right)=\frac{1}{T}\log\kappa_n(\mathbf{x}_2)=\\
\frac{1}{T}\log\sigma_{1}^{-1}(\mathbf{x}_1)=-\frac{1}{T}\log\sigma_{1}(\mathbf{x}_1),\label{eq:FTLE-backward}
\end{multline}
which was first obtained in Ref.\ \onlinecite[Prop.\ 1]{Haller2011b}, and paraphrased
there as: the largest backward FTLE
equals the negative of the \emph{smallest} forward FTLE $\frac{1}{T}\log\sigma_{1}$.
For two-dimensional incompressible flows, we re-derive Prop.\ 2 in Ref.\ \onlinecite{Haller2011b} from Eq.\ \eqref{eq:svd_incompressible}, i.e.,
\begin{multline}
\Lambda^{\text{b}}\left(\mathbf{x}_{2}\right)=\frac{1}{T}\log\sigma_{1}^{-1}\left(\mathbf{x}_{1}\right)
=\\
\frac{1}{T}\log\sigma_{2}\left(\mathbf{x}_{1}\right)=\Lambda^{\text{f}}\left(\mathbf{x}_{1}\right).
\label{eq:forward-backward}
\end{multline}

Even though Eq.\ \eqref{eq:forward-backward} has been derived earlier\cite{Haller2011b},
a significant implication for visual FTLE ridge-based LCS approaches has been overlooked.
For simplicity, suppose we identify a visual backward FTLE ridge $\mathcal{S}_2$ at the final time, which is backward normally repelling. That is, $\mathcal{S}_2$ is characterized by
high $\Lambda^\textnormal{b}$-values relative to neighboring points off $\mathcal{S}_2$. Its
flow-preimage $\mathcal{S}_1=\mathbf{F}^{-1}(\mathcal{S}_2)$ at the initial time is a forward attracting LCS with high
$\Lambda^{\textnormal{f}}$-values relative to neighboring points off $\mathcal{S}_1$, and thus appears as a visual forward FTLE ridge. Due to backward normal repulsion,
$\mathcal{S}_2$ is backward tangentially shrinking, and consequently $\mathcal{S}_1$ may have
a length on the scale of---or possibly below---computational resolution. Therefore, in practice
the likeliness for a visually observable, attracting FTLE ridge depends on the ratio between the
strength of hyperbolicity, the length of the FTLE ridge and the length of integration
time.
In any case, this does not affect the very fact that strongly forward attracting flow
structures come along with high forward FTLE values, as demonstrated in \prettyref{sec:saddle}.

\section{LCS Approaches Including Principal Directions}\label{sec:LCS}

For most LCS approaches based on particle separation there are no explicit
characterizations of attracting LCSs in forward time.
In this section, we provide a short and direct proof of such a characterization
for the geodesic approach\cite{Farazmand2013}, and the new
corresponding result for the variational approach\cite{Haller2011,Farazmand2012,Karrasch2012}. We revert the chronological order of the development
of these two concepts, as the variational approach contains more mathematical conditions, covering
those of the geodesic approach.

\subsection{Technical preliminaries}

\begin{defn}[{Generalized maximum \cite{Eberly1996}}]
Let $f$ be a smooth scalar field, defined on some open subset $D\subseteq\mathcal{M}$, and $\mathbf{v}$ be a smooth
vector field on $D$. Then $\mathbf{x}\in D$ is a \emph{generalized maximum of $f$ with respect to $\mathbf{v}$}, if
\begin{align*}
\mathcal{L}_{\mathbf{v}}f(\mathbf{x}) & =0, & \mathcal{L}_{\mathbf{v}}^{2}f(\mathbf{x}) &<0.
\end{align*}
Here, $\mathcal{L}_{\mathbf{v}}f$ and $\mathcal{L}_{\mathbf{v}}^{2}f(\mathbf{x})$ denote, respectively,
the first- and second-order Lie derivatives of $f$ with respect to $\mathbf{v}$ at $\mathbf{x}$. A \emph{generalized minimum of $f$} is defined as a generalized maximum of $-f$.
\end{defn}

The first- and second-order Lie derivatives correspond to directional derivatives. In the literature and the Euclidean case, they are also written as
\begin{align*}
\mathcal{L}_{\mathbf{v}}f(\mathbf{x})&=\left\langle \mathbf{v}\left(\mathbf{x}\right),\nabla f(\mathbf{x})\right\rangle,\\
\mathcal{L}_{\mathbf{v}}^{2}f(\mathbf{x}) &= \left\langle \mathbf{v}\left(\mathbf{x}\right),\nabla^{2}f\left(\mathbf{x}\right)\mathbf{v}\left(\mathbf{x}\right)\right\rangle.
\end{align*}

\begin{lem}\label{lem:Lie_transform}
Let $\xi$ and $\theta$ be smooth, normalized vector fields on some open subset $D\subseteq\mathcal{D}\subseteq\mathcal{M}$ and $\mathbf{F}[D]$, respectively,
and $f$ and $g$ be smooth scalar functions on $D$ and $\mathbf{F}[D]$, respectively. Suppose the vector fields and the functions are $\mathbf{F}$-related, i.e.\ $\mathbf{F}_{*}\xi=\left(\mathbf{F}_{*}\sigma\right)\theta$
with a smooth, nowhere vanishing weight function $\sigma$, and $\mathbf{F}_{*}f=g$
or equivalently $\mathbf{F}^{*}g=f$. Then the following equation
holds for any $\mathbf{x}_{1}\in D$ and $\mathbf{x}_{2}=\mathbf{F}(\mathbf{x}_{1})$:
\begin{equation}
\mathcal{L}_{\xi}f\left(\mathbf{x}_{1}\right)=\sigma\left(\mathbf{x}_{1}\right)\mathcal{L}_{\theta}g\left(\mathbf{x}_{2}\right).\label{eq:transfer_rule}
\end{equation}
If \eqref{eq:transfer_rule} vanishes at $\mathbf{x}_{1}$, then
\begin{equation}
\mathcal{L}_{\xi}^{2}f\left(\mathbf{x}_{1}\right)=\sigma^{2}\left(\mathbf{x}_{1}\right)\mathcal{L}_{\theta}^{2}g\left(\mathbf{x}_{2}\right).\label{eq:transfer_rule_2}
\end{equation}
\end{lem}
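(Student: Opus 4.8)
The plan is to exploit the defining property of the push-forward/pullback, namely that for an $\mathbf{F}$-related pair $(\xi,\theta)$ the Lie derivative transforms covariantly, and to feed in the $\mathbf{F}$-relatedness of the functions $f$ and $g$. Concretely, I would start from the identity $\mathbf{F}^{*}g = f$ and differentiate: for the first-order part, apply the vector field $\xi$ at $\mathbf{x}_1$ to both sides. Since $\xi$ is $\mathbf{F}$-related (up to the positive weight $\sigma$) to $\theta$, one has $D\mathbf{F}(\mathbf{x}_1)\,\xi(\mathbf{x}_1) = \sigma(\mathbf{x}_1)\,\theta(\mathbf{x}_2)$, so that $\mathcal{L}_{\xi}(\mathbf{F}^{*}g)(\mathbf{x}_1) = \langle dg(\mathbf{x}_2),\, D\mathbf{F}(\mathbf{x}_1)\xi(\mathbf{x}_1)\rangle = \sigma(\mathbf{x}_1)\,\mathcal{L}_{\theta}g(\mathbf{x}_2)$. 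Combined with $\mathbf{F}^{*}g=f$ this is exactly \eqref{eq:transfer_rule}. This first step is essentially the chain rule written invariantly; the key input is merely that the weight $\sigma$ relates the two vector fields and is nowhere zero (so it may later be divided out, though that is not needed here).

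For the second-order relation \eqref{eq:transfer_rule_2}, the clean way is to iterate: $\mathcal{L}_{\xi}^{2}f = \mathcal{L}_{\xi}(\mathcal{L}_{\xi}f)$, and then apply the already-established rule \eqref{eq:transfer_rule} to the scalar function $h_1 \coloneqq \mathcal{L}_{\xi}f$ on $D$ and its $\mathbf{F}$-pushforward $h_2 \coloneqq \mathbf{F}_{*}h_1$ on $\mathbf{F}[D]$. By \eqref{eq:transfer_rule} itself, $h_1 = \sigma\,(\mathbf{F}^{*}\mathcal{L}_{\theta}g)$, i.e.\ $h_2 = (\mathbf{F}_{*}\sigma)\,\mathcal{L}_{\theta}g$. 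Applying \eqref{eq:transfer_rule} once more, now to the $\mathbf{F}$-related pair $(h_1,h_2)$, gives $\mathcal{L}_{\xi}h_1(\mathbf{x}_1) = \sigma(\mathbf{x}_1)\,\mathcal{L}_{\theta}h_2(\mathbf{x}_2)$. It remains to expand $\mathcal{L}_{\theta}h_2 = \mathcal{L}_{\theta}\!\big((\mathbf{F}_{*}\sigma)\,\mathcal{L}_{\theta}g\big)$ by the Leibniz rule: this produces $(\mathbf{F}_{*}\sigma)\,\mathcal{L}_{\theta}^{2}g + \big(\mathcal{L}_{\theta}(\mathbf{F}_{*}\sigma)\big)\,\mathcal{L}_{\theta}g$. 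Evaluated at $\mathbf{x}_2$, the factor $\mathbf{F}_{*}\sigma$ equals $\sigma(\mathbf{x}_1)$, so the first term contributes $\sigma^{2}(\mathbf{x}_1)\,\mathcal{L}_{\theta}^{2}g(\mathbf{x}_2)$, which is the desired right-hand side; the spurious second term is proportional to $\mathcal{L}_{\theta}g(\mathbf{x}_2)$, which vanishes precisely because we assumed $\mathcal{L}_{\xi}f(\mathbf{x}_1) = 0$ and hence, by \eqref{eq:transfer_rule}, $\mathcal{L}_{\theta}g(\mathbf{x}_2)=0$.

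The main obstacle — really the only subtlety — is bookkeeping the appearance of the derivative of the weight $\sigma$ in the second-order computation and seeing that the hypothesis "\eqref{eq:transfer_rule} vanishes at $\mathbf{x}_1$" is exactly what kills it. This is why the lemma cannot drop that hypothesis: without it the second-order Lie derivatives are related only modulo a first-order correction term carrying $\mathcal{L}_{\theta}\sigma$. A secondary point worth a sentence in the writeup is that all of this is local and coordinate-free: the Lie derivative of a function along a vector field is just directional differentiation, $\mathbf{F}$-relatedness of vector fields is $\mathbf{F}_{*}\xi = (\mathbf{F}_{*}\sigma)\theta$ by hypothesis, and $\mathbf{F}$ is a diffeomorphism onto its image, so $\mathbf{F}_{*}$ and $\mathbf{F}^{*}$ on functions are mutually inverse algebra isomorphisms and commute with restriction to the open set $D$. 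No Riemannian structure beyond what is already in place is needed; the metric entered only through the normalization of $\xi,\theta$, which plays no role in the identities themselves.
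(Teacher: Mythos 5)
Your proposal is correct and follows essentially the same route as the paper: the first-order identity is the invariant chain rule (the paper phrases it as the commutation of pullback with Lie derivatives along $\mathbf{F}$-related fields), and the second-order identity is obtained by iterating the first-order rule, applying the Leibniz rule once, and killing the cross term proportional to $\mathcal{L}_{\theta}g(\mathbf{x}_{2})$ via the vanishing hypothesis. The only cosmetic difference is that you apply the Leibniz rule on the target side to $(\mathbf{F}_{*}\sigma)\mathcal{L}_{\theta}g$, whereas the paper applies it on the source side to $\sigma\cdot\mathbf{F}^{*}(\mathcal{L}_{\theta}g)$; the resulting terms coincide under $\mathbf{F}^{*}$.
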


\begin{proof}
We use \eqref{eq:F-relation}, and Propositions 4.2.8 and 4.2.15(i) from \onlinecite{Abraham1988}, to get
\[
\begin{split}\mathcal{L}_{\xi}f(\mathbf{x}_{1}) & =\mathcal{L}_{\xi}\left(\mathbf{F}^{*}g\right)(\mathbf{x}_{1})=\mathbf{F}^{*}\left(\mathcal{L}_{\left(\mathbf{F}_{*}\sigma\right)\theta}g\right)(\mathbf{x}_{1})\\
 &
=\sigma\left(\mathbf{x}_{1}\right)\mathbf{F}^{*}\mathcal{L}_{\theta}g\left(\mathbf{x}_{1}\right)=\sigma\left(\mathbf{x}_{1}\right)\mathcal{L}_{\theta}g\left(\mathbf{x}_{2}\right).
\end{split}
\]
Eq.\ \eqref{eq:transfer_rule_2} follows similarly:
\begin{align*}
\mathcal{L}_{\xi}^{2}f(\mathbf{x}_{1}) & =\mathcal{L}_{\xi}\left(\mathcal{L}_{\xi}f\right)(\mathbf{x}_{1})\\
 & =\mathcal{L}_{\xi}\left(\sigma\cdot\mathbf{F}^{*}\left(\mathcal{L}_{\theta}g\right)\right)\left(\mathbf{x}_{1}\right)\\
 & =\left(\mathcal{L}_{\xi}\sigma\cdot\mathbf{F}^{*}\left(\mathcal{L}_{\theta}g\right)+\sigma^{2}\cdot\mathbf{F}^{*}\left(\mathcal{L}_{\theta}^{2}g\right)\right)\left(\mathbf{x}_{1}\right)\\
 & =\sigma\left(\mathbf{x}_{1}\right)^{2}\mathcal{L}_{\theta}^{2}g(\mathbf{x}_{2}),
\end{align*}
where the last identity follows from the assumption $\mathcal{L}_{\theta}g\left(\mathbf{x}_{2}\right)=0$.
\end{proof}

\begin{rem}\label{rem:Lie-lemma}
By Eqs.\ \eqref{eq:F-relation} and \eqref{eq:bw-F-relation}, \prettyref{lem:Lie_transform}
applies to Lie derivatives of forward and backward singular value fields with respect to singular vector fields, which will be used to prove \prettyref{thm:variation_characterization}.
\end{rem}

\subsection{The relaxed variational/geodesic approach to hyperbolic LCSs}

In the following, we will work with material surfaces, i.e., flow-invariant, time-parametric
families of codimension-one submanifolds $\mathcal{S}(t)$ of $\mathcal{M}$, $t\in[t_1,t_2]$, satisfying
$\mathbf{F}_s^r(\mathcal{S}(s))=\mathcal{S}(r)$ for any $s,r\in[t_1,t_2]$.

\begin{defn}[Strain- and stretch-surface\cite{Farazmand2013}]
We call a material surface $\mathcal{S}(t)$ a \emph{forward strain-}
and \emph{stretch-surface} if $\mathcal{S}(t_{1})$ is everywhere
normal to the right-singular vector fields $\xi_{n}$ and $\xi_{1}$, respectively,
i.e., for any $\mathbf{x}_{1}\in\mathcal{S}(t_{1})$ one has, respectively,
\begin{align*}
\xi_{n}(\mathbf{x}_{1}) & \perp T_{\mathbf{x}_{1}}\mathcal{S}(t_{1}), & \text{\text{and}} &  & \xi_{1}(\mathbf{x}_{1}) & \perp T_{\mathbf{x}_{1}}\mathcal{S}(t_{1}).
\end{align*}
We call $\mathcal{S}(t)$ a \emph{backward strain-} and \emph{stretch-surface} if $\mathcal{S}(t_{2})$ is everywhere normal to the left-singular vector fields
$\theta_{1}$ and $\theta_{n}$, respectively, i.e., for any $\mathbf{x}_{2}\in\mathcal{S}(t_{2})$
one has, respectively,
\begin{align*}
\theta_{1}(\mathbf{x}_{2}) & \perp T_{\mathbf{x}_{2}}\mathcal{S}(t_{2}), & \text{and} &  & \theta_{n}(\mathbf{x}_{2}) & \perp T_{\mathbf{x}_{2}}\mathcal{S}(t_{2}).
\end{align*}
\end{defn}

Strain-surfaces are normally repelling flow structures, while stretch-surfaces
are normally attracting. Both types admit no Lagrangian shear\cite{Haller2012}. In dimension $2$, strain- and stretch-surfaces are referred to as \emph{strain-} and \emph{stretchlines}. They have the \emph{local} property, that they are pointwise aligned with the direction of strongest compression or stretching, respectively.
In Ref.\ \onlinecite{Farazmand2014a} it is shown that they also satisfy a \emph{global} variational principle, and that they are null-geodesics of a Lorentzian metric. In dimension $3$ and higher, strain- and stretch-surfaces arise from a local variational principle, which compares candidate material surfaces to those perturbed in tangent orientation pointwise. Existence of (hyper-)surfaces orthogonal
to a given (nonlinear) vector field as well as their numerical construction
are challenging\cite{Abraham1988,Palmerius2009}, see Ref.~\onlinecite{Blazevski2014} for the local variational approach in dimension $3$.


\begin{thm}[{\onlinecite[Theorem 1]{Farazmand2013}}]\label{thm:forward_backward}
Forward strain-surfaces coincide with backward stretch-sur\-faces, and forward stretch-surfaces coincide with backward strain-surfaces.
\end{thm}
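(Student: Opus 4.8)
The plan is to exploit two facts: at each point the right-singular vectors $\xi_1,\dots,\xi_n$ form an orthonormal basis of $T_{\mathbf{x}_1}\mathcal{M}$ (and likewise $\theta_1,\dots,\theta_n$ of $T_{\mathbf{x}_2}\mathcal{M}$), and, by \eqref{eq:F-relation}, $D\mathbf{F}$ carries each $\xi_i$ to a positive multiple of $\theta_i$. The crucial observation is that the defining conditions of strain- and stretch-surfaces are really \emph{linear-span} conditions, and span conditions---unlike orthogonality itself---are preserved by the generally non-orthogonal map $D\mathbf{F}$.

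First I would reformulate the normality conditions. Since $\mathcal{S}$ has codimension one, $T_{\mathbf{x}_1}\mathcal{S}(t_1)$ is $(n-1)$-dimensional, and since $\{\xi_i(\mathbf{x}_1)\}_{i=1}^{n}$ is orthonormal, $\xi_n(\mathbf{x}_1)\perp T_{\mathbf{x}_1}\mathcal{S}(t_1)$ holds if and only if $T_{\mathbf{x}_1}\mathcal{S}(t_1)=\operatorname{span}\{\xi_1,\dots,\xi_{n-1}\}$, and $\xi_1(\mathbf{x}_1)\perp T_{\mathbf{x}_1}\mathcal{S}(t_1)$ if and only if $T_{\mathbf{x}_1}\mathcal{S}(t_1)=\operatorname{span}\{\xi_2,\dots,\xi_n\}$. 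Verbatim at the final time: $\theta_n(\mathbf{x}_2)\perp T_{\mathbf{x}_2}\mathcal{S}(t_2)$ iff $T_{\mathbf{x}_2}\mathcal{S}(t_2)=\operatorname{span}\{\theta_1,\dots,\theta_{n-1}\}$, and $\theta_1(\mathbf{x}_2)\perp T_{\mathbf{x}_2}\mathcal{S}(t_2)$ iff $T_{\mathbf{x}_2}\mathcal{S}(t_2)=\operatorname{span}\{\theta_2,\dots,\theta_n\}$. Only $\sigma_n$ and $\sigma_1$ being simple is needed here, which is covered by the standing assumption that the SVD is well-defined.

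Next I would transport these spans through the flow. Material invariance $\mathbf{F}(\mathcal{S}(t_1))=\mathcal{S}(t_2)$ gives, upon differentiation, that $D\mathbf{F}(\mathbf{x}_1)$ restricts to a linear isomorphism $T_{\mathbf{x}_1}\mathcal{S}(t_1)\to T_{\mathbf{x}_2}\mathcal{S}(t_2)$ for $\mathbf{x}_2=\mathbf{F}(\mathbf{x}_1)$; and by \eqref{eq:F-relation} it maps $\operatorname{span}\{\xi_i : i\neq k\}$ onto $\operatorname{span}\{\theta_i : i\neq k\}$ for each $k$. Chaining the equivalences, $\mathcal{S}(t_1)$ is normal to $\xi_n$ iff $T_{\mathbf{x}_1}\mathcal{S}(t_1)=\operatorname{span}\{\xi_1,\dots,\xi_{n-1}\}$ iff $T_{\mathbf{x}_2}\mathcal{S}(t_2)=\operatorname{span}\{\theta_1,\dots,\theta_{n-1}\}$ iff $\mathcal{S}(t_2)$ is normal to $\theta_n$, i.e.\ forward strain-surface $\Leftrightarrow$ backward stretch-surface; exchanging the index $n$ for $1$ throughout yields forward stretch-surface $\Leftrightarrow$ backward strain-surface. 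Since every step is an equivalence the converse directions come for free; alternatively they follow directly from \eqref{eq:bw-F-relation}, which exhibits $D\mathbf{F}^{-1}$ as sending each $\theta_i$ to a positive multiple of $\xi_i$.

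The only genuinely delicate point is the first step: recognizing that ``orthogonal to $\xi_n$'' is better recorded as ``equal to the span of the remaining right-singular vectors,'' since the latter is a subspace identity that $D\mathbf{F}$ respects (thanks to the one-to-one pairing of right- and left-singular directions), whereas orthogonality as such is destroyed by a non-isometry. Once this is noticed no computation remains, and the coincidence is essentially the picture in Fig.\ \ref{fig:svd}.
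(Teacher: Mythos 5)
Your proposal is correct and takes essentially the same route as the paper: both recast ``normal to a singular direction'' as the tangent space equaling the span of the remaining singular vectors, and then transport that span through the linearized flow via the $\mathbf{F}$-relation \eqref{eq:F-relation} (resp.\ \eqref{eq:bw-F-relation}) together with flow-invariance of the material surface. The paper runs the argument once with $D\mathbf{F}^{-1}$ sending $\operatorname{span}\{\theta_2,\dots,\theta_n\}$ to $\operatorname{span}\{\xi_2,\dots,\xi_n\}$, whereas you run the forward map and note all steps are equivalences, but this is the same proof.
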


\begin{proof}
We prove only that backward strain-surfaces coincide with forward stretch-sur\-faces, the other statement can be shown analogously. The relation $\theta_1(\mathbf{x}_{2})\perp T_{\mathbf{x}_{2}}\mathcal{S}(t_{2})$ is equivalent to the fact that $T_{\mathbf{x}_{2}}\mathcal{S}(t_{2})$ is the span of $\left\lbrace\theta_{2}(\mathbf{x}_{2}),\ldots,\theta_{n}\left(\mathbf{x}_{2}\right)\right\rbrace$.
$T_{\mathbf{x}_{2}}\mathcal{S}(t_{2})$ is mapped bijectively to $T_{\mathbf{x}_{1}}\mathcal{S}\left(t_{1}\right)$
by $D\mathbf{F}^{-1}$ by flow-invariance of $\mathcal{S}(t)$. In turn,
$T_{\mathbf{x}_{1}}\mathcal{S}\left(t_{1}\right)$ is spanned by $\left\lbrace\xi_{2}(\mathbf{x}_{1}),\ldots,\xi_{n}\left(\mathbf{x}_{1}\right)\right\rbrace$
and consequently orthogonal to $\xi_{1}\left(\mathbf{x}_{1}\right)$.
\end{proof}

\begin{example}\label{example:saddle_geo}
We apply the geodesic LCS approach to the saddle flow from
\prettyref{sec:saddle}. Here, we only focus on checking the common hypothesis, 
that FTLE ridges computed in forward time correspond to repelling LCSs. 
in Fig.\ \ref{fig:nlin_saddle_geo}, we plot strainlines (red), which are normally repelling and do not pick up the visual FTLE ridge. In contrast, stretchlines, which are normally 
attracting, do align with the orientation of the visual ridge, which is 
picked up by the one passing through the saddle at the origin.

\begin{figure}
\centering
\includegraphics[width=0.6\columnwidth]{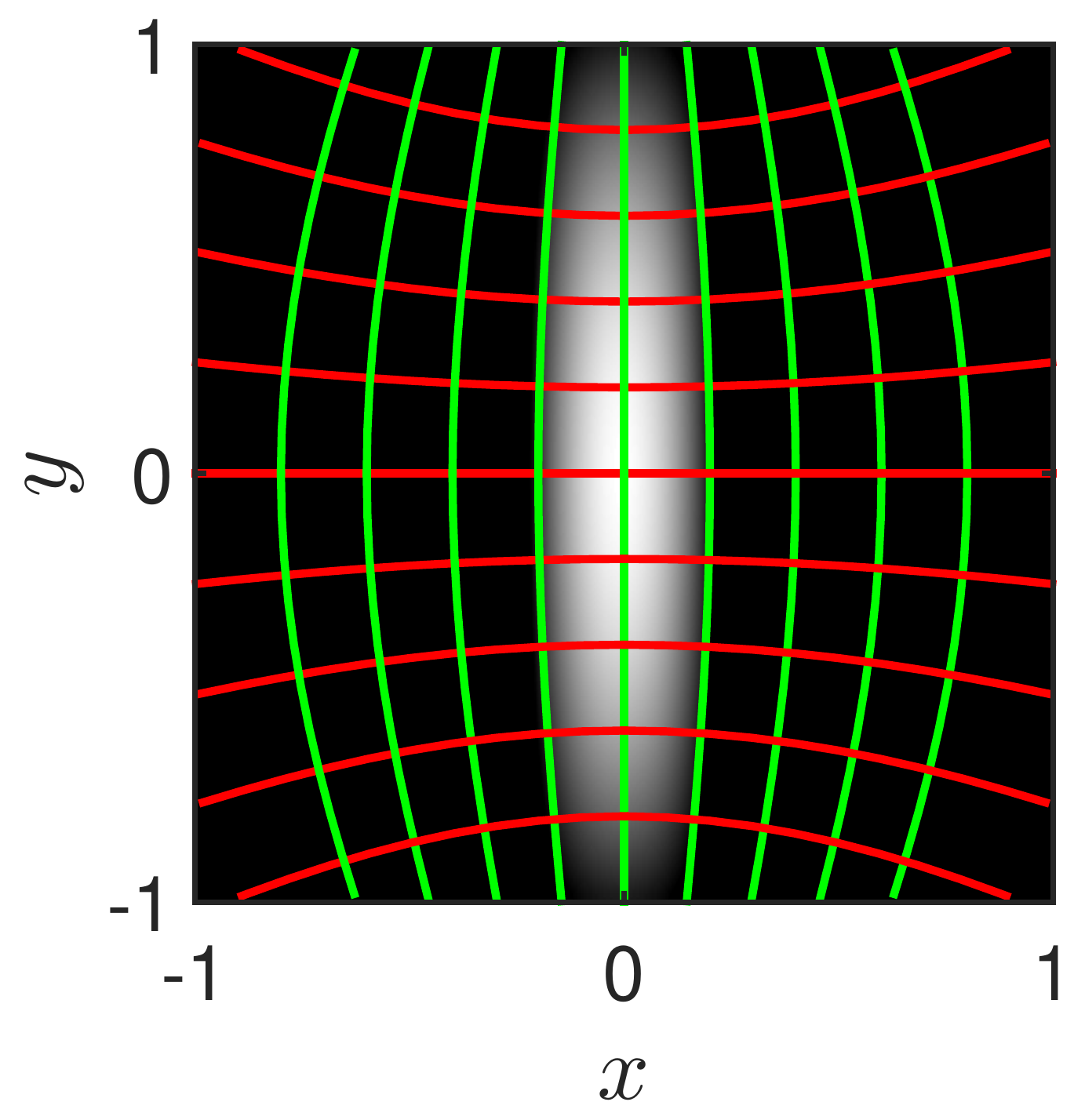}
\caption{Strainlines (red) and stretchlines (green) on top of the forward FTLE field from Fig.\ \ref{fig:nlin_saddle}(b) in gray scale. Clearly, only normally attracting structures align with the visual FTLE ridge.}
\label{fig:nlin_saddle_geo}
\end{figure}
\end{example}

\subsection{The variational approach to hyperbolic LCSs}

In Ref.~\onlinecite{Haller2011}, hyperbolic LCSs are defined variationally as locally
strongest normally repelling material lines.
The local variational principle compares LCS candidate material surfaces
to surfaces smoothly perturbed in position and tangent orientation.

\begin{defn}[{Variational hyperbolic LCS\cite{Haller2011,Karrasch2012}}]\label{def:varLCS}
We call a compact material hypersurface $\mathcal{S}(t)$ a \emph{variational
repelling LCS over $[t_{1},t_{2}]$} if
\begin{enumerate}
\item $\mathcal{S}(t)$ is a forward strain-surface, i.e.\ $\xi_n\perp T\mathcal{S}(t_1)$,
\item each $\mathbf{x}_1\in\mathcal{S}(t_1)$ is a generalized maximum of $\sigma_n$ with respect to $\xi_n$, i.e.\ $\mathcal{L}_{\xi_n}\sigma_n=0$ and $\mathcal{L}^2_{\xi_n}\sigma_n<0$ on $\mathcal{S}(t_1)$, and
\item at each $\mathbf{x}_1\in\mathcal{S}(t_1)$ one has $\sigma_{n-1}(\mathbf{x}_{1})\neq\sigma_{n}(\mathbf{x}_{1})>1$.
\end{enumerate}
We call $\mathcal{S}(t)$ a \emph{variational attracting LCS over
$[t_{1},t_{2}]$} if it is a variational repelling LCS over $\left[t_{1},t_{2}\right]$
for the backward flow.
\end{defn}
Note that the differentiability assumption on singular vector fields\cite{Karrasch2012}
is satisfied by our general hypothesis on simplicity of singular values. The third condition in
\prettyref{def:varLCS} requests that normal repulsion dominates tangential stretching.

\begin{thm}[Characterization of variational attracting LCSs]
\label{thm:variation_characterization} Assume that $\mathcal{S}(t)$
is a variational attracting LCS over $[t_{1},t_{2}]$. Then the following statements hold:
\begin{enumerate}
\item $\mathcal{S}(t)$ is a forward stretch-surface,
\item each $\mathbf{x}_1\in\mathcal{S}(t_1)$ is a generalized minimum of $\sigma_1$ with respect to $\xi_1$, and
\item at each $\mathbf{x}_1\in\mathcal{S}(t_1)$ one has $\sigma_{2}(\mathbf{x}_{1})\neq\sigma_{1}(\mathbf{x}_{1})<1$.
\end{enumerate}
\end{thm}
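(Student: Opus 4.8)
The plan is to translate each of the three defining properties of a variational attracting LCS---which by Definition~\ref{def:varLCS} are statements about the \emph{backward} flow---into the corresponding forward statements, using the dictionary between forward and backward singular values and vectors established in Eqs.~\eqref{eq:bw-F-relation}, together with the transfer rule of \prettyref{lem:Lie_transform}. The key observation is that $\mathcal{S}(t)$ being a variational repelling LCS for the backward flow over $[t_1,t_2]$ means, by Definition~\ref{def:varLCS} applied to $\mathbf{F}^{-1}$: (i$'$) $\mathcal{S}(t_2)$ is a backward strain-surface, i.e.\ $\theta_1 \perp T\mathcal{S}(t_2)$ (recall from Eq.~\eqref{eq:svd_inverse} that the backward principal directions at $\mathbf{x}_2$ are the $\theta_j$, with the largest backward stretch belonging to $\theta_1$ since $\kappa_n = \sigma_1^{-1}$); (ii$'$) each $\mathbf{x}_2 \in \mathcal{S}(t_2)$ is a generalized maximum of the largest backward singular value $\kappa_n = \sigma_1^{-1}\circ\mathbf{F}^{-1}$ with respect to $\theta_1$; and (iii$'$) $\kappa_{n-1}(\mathbf{x}_2) \neq \kappa_n(\mathbf{x}_2) > 1$, where $\kappa_{n-1} = \sigma_2^{-1}\circ\mathbf{F}^{-1}$.

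For statement (1), I would argue exactly as in the proof of \prettyref{thm:forward_backward}: condition (i$'$) says $\mathcal{S}(t_2)$ is a backward strain-surface, hence by \prettyref{thm:forward_backward} it is a forward stretch-surface, i.e.\ $\xi_1 \perp T\mathcal{S}(t_1)$. For statement (3), I would simply invert (iii$'$): from $\kappa_n(\mathbf{x}_2) = \sigma_1^{-1}(\mathbf{x}_1) > 1$ we get $\sigma_1(\mathbf{x}_1) < 1$, and from $\kappa_{n-1}(\mathbf{x}_2) = \sigma_2^{-1}(\mathbf{x}_1) \neq \kappa_n(\mathbf{x}_2) = \sigma_1^{-1}(\mathbf{x}_1)$ we get $\sigma_2(\mathbf{x}_1) \neq \sigma_1(\mathbf{x}_1)$; no derivatives are involved here, just the reciprocal relations in Eq.~\eqref{eq:bw-F-relation}.

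The substantive step is statement (2), and this is where \prettyref{lem:Lie_transform} does the work. By Eq.~\eqref{eq:bw-F-relation} the scalar fields $\sigma_1^{-1}$ (on $\mathcal{D}$) and $\kappa_n$ (on $\mathbf{F}[\mathcal{D}]$) are $\mathbf{F}$-related, $\mathbf{F}^*\kappa_n = \sigma_1^{-1}$, and the vector fields $\xi_1$ and $\theta_1$ are $\mathbf{F}$-related with weight $\sigma_1^{-1}$, i.e.\ $(\mathbf{F}^{-1})_*\theta_1 = \sigma_1^{-1}\xi_1$; applying \prettyref{lem:Lie_transform} with the roles of $\mathbf{F}$ and $\mathbf{F}^{-1}$ interchanged (as flagged in \prettyref{rem:Lie-lemma}) gives $\mathcal{L}_{\theta_1}\kappa_n(\mathbf{x}_2) = \sigma_1^{-1}(\mathbf{x}_1)\,\mathcal{L}_{\xi_1}(\sigma_1^{-1})(\mathbf{x}_1)$ and, when the first-order term vanishes, $\mathcal{L}^2_{\theta_1}\kappa_n(\mathbf{x}_2) = \sigma_1^{-2}(\mathbf{x}_1)\,\mathcal{L}^2_{\xi_1}(\sigma_1^{-1})(\mathbf{x}_1)$. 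Since $\sigma_1^{-1}$ is positive, $\mathcal{L}_{\xi_1}(\sigma_1^{-1}) = -\sigma_1^{-2}\mathcal{L}_{\xi_1}\sigma_1$, so $\mathcal{L}_{\xi_1}(\sigma_1^{-1}) = 0 \iff \mathcal{L}_{\xi_1}\sigma_1 = 0$; and on the zero set, $\mathcal{L}^2_{\xi_1}(\sigma_1^{-1}) = -\sigma_1^{-2}\mathcal{L}^2_{\xi_1}\sigma_1$, so the sign of $\mathcal{L}^2_{\xi_1}(\sigma_1^{-1})$ is opposite to that of $\mathcal{L}^2_{\xi_1}\sigma_1$. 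Condition (ii$'$), $\mathcal{L}_{\theta_1}\kappa_n = 0$ and $\mathcal{L}^2_{\theta_1}\kappa_n < 0$ at $\mathbf{x}_2$, therefore forces $\mathcal{L}_{\xi_1}\sigma_1(\mathbf{x}_1) = 0$ and $\mathcal{L}^2_{\xi_1}\sigma_1(\mathbf{x}_1) > 0$, i.e.\ $\mathbf{x}_1$ is a generalized minimum of $\sigma_1$ with respect to $\xi_1$. The main obstacle is purely bookkeeping: keeping straight the index reversal between the $\theta$- and $\kappa$-orderings (Eq.~\eqref{eq:bw-F-relation}), verifying that \prettyref{lem:Lie_transform} is genuinely applicable with $\mathbf{F}^{-1}$ in place of $\mathbf{F}$ and $\sigma_1^{-1}$ as the nowhere-vanishing weight, and correctly propagating the sign flips induced by passing from $\sigma_1$ to $\sigma_1^{-1}$; none of these is deep, but all must be handled carefully.
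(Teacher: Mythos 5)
Your proposal is correct and follows essentially the same route as the paper: item (1) via Theorem \ref{thm:forward_backward}, item (2) via Lemma \ref{lem:Lie_transform} applied to the backward flow together with the sign reversal under $x\mapsto x^{-1}$ on $\R_{>0}$ (which the paper phrases as ``strict monotonic decay of the inversion''), and item (3) by the reciprocal relations $\kappa_{n+1-j}=\sigma_j^{-1}\circ\mathbf{F}^{-1}$. You merely spell out the bookkeeping that the paper's two-sentence proof leaves implicit.
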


\begin{proof}
Item 1.\ is the statement of \prettyref{thm:forward_backward}.
Item 2.\ follows from \prettyref{lem:Lie_transform}, cf.\ \prettyref{rem:Lie-lemma}, and strict monotonic decay of the inversion on $\R_{>0}$. Finally, $\sigma_{2}(\mathbf{x}_{1})\neq\sigma_{1}(\mathbf{x}_{1})<1$ is clearly equivalent to the assumption $\kappa_{n-1}(\mathbf{x}_{2})=\sigma_{2}^{-1}(\mathbf{x}_{1})\neq\sigma_{1}^{-1}(\mathbf{x}_{1})=\kappa_n(\mathbf{x}_{2})>1$.
\end{proof}

\begin{cor}
\label{cor:variation_characterization}Let $\mathcal{S}(t)$ be a
variational attracting LCS over $[t_{1},t_{2}]$ for a two-dimensional
incompressible velocity field. Then each $\mathbf{x}_{1}\in\mathcal{S}(t_{1})$
satisfies in particular:
\begin{gather*}
\mathcal{L}_{\xi_{1}}\sigma_{2}(\mathbf{x}_{1})=0,\\
\mathcal{L}_{\xi_{1}}^{2}\sigma_{2}(\mathbf{x}_{1})<0,\\
\xi_{1}(\mathbf{x}_{1})\perp T_{\mathbf{x}_{1}}\mathcal{S}(t_{1}).
\end{gather*}
\end{cor}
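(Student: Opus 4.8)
The plan is to derive the corollary directly from \prettyref{thm:variation_characterization} by specializing to $n=2$ and exploiting the incompressibility relation \eqref{eq:svd_incompressible}, namely $\sigma_2 = \sigma_1^{-1}$. Since the hypotheses of the theorem are assumed, we may invoke all three of its conclusions verbatim with $n=2$: item 1 gives $\xi_1(\mathbf{x}_1)\perp T_{\mathbf{x}_1}\mathcal{S}(t_1)$ (this is precisely the forward stretch-surface condition, as $\xi_1=\xi_n$ when $n=2$), which is the third claimed equation. Item 2 gives that each $\mathbf{x}_1\in\mathcal{S}(t_1)$ is a generalized minimum of $\sigma_1$ with respect to $\xi_1$, i.e.\ $\mathcal{L}_{\xi_1}\sigma_1(\mathbf{x}_1)=0$ and $\mathcal{L}^2_{\xi_1}\sigma_1(\mathbf{x}_1)>0$.

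The remaining work is to translate the statement ``generalized minimum of $\sigma_1$'' into ``generalized maximum of $\sigma_2$'' using $\sigma_2 = \sigma_1^{-1}$. First I would compute the Lie derivatives of a reciprocal: for a smooth, nowhere-vanishing scalar $\sigma_1$ one has $\mathcal{L}_{\xi_1}(\sigma_1^{-1}) = -\sigma_1^{-2}\,\mathcal{L}_{\xi_1}\sigma_1$, so the first-order condition $\mathcal{L}_{\xi_1}\sigma_1(\mathbf{x}_1)=0$ immediately yields $\mathcal{L}_{\xi_1}\sigma_2(\mathbf{x}_1)=0$, the first claimed equation. For the second order, differentiating once more gives
\[
\mathcal{L}_{\xi_1}^2(\sigma_1^{-1}) = 2\sigma_1^{-3}\left(\mathcal{L}_{\xi_1}\sigma_1\right)^2 - \sigma_1^{-2}\,\mathcal{L}_{\xi_1}^2\sigma_1,
\]
and at $\mathbf{x}_1$ the first term vanishes because $\mathcal{L}_{\xi_1}\sigma_1(\mathbf{x}_1)=0$, leaving $\mathcal{L}_{\xi_1}^2\sigma_2(\mathbf{x}_1) = -\sigma_1^{-2}(\mathbf{x}_1)\,\mathcal{L}_{\xi_1}^2\sigma_1(\mathbf{x}_1)$. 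Since $\sigma_1^{-2}(\mathbf{x}_1)>0$ and, by item 2, $\mathcal{L}_{\xi_1}^2\sigma_1(\mathbf{x}_1)>0$, we conclude $\mathcal{L}_{\xi_1}^2\sigma_2(\mathbf{x}_1)<0$, the second claimed equation. This is simply the elementary fact that inversion on $\mathbb{R}_{>0}$ is a strictly decreasing diffeomorphism, so it carries generalized minima to generalized maxima, already noted in the proof of \prettyref{thm:variation_characterization}.

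There is essentially no obstacle here; the corollary is a bookkeeping consequence of the theorem plus the reciprocal chain rule. The only point requiring a word of care is that the vanishing of the first-order Lie derivative is genuinely needed to kill the cross term in the second-order computation — this is the same structural feature already present in \prettyref{lem:Lie_transform} (Eq.\ \eqref{eq:transfer_rule_2}). I would present the argument in three short lines, one per displayed conclusion of the corollary, citing \prettyref{thm:variation_characterization} for the inputs and \eqref{eq:svd_incompressible} for the reciprocal identity.
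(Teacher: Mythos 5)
Your proof is correct and is essentially the paper's own (implicit) argument: the corollary is just \prettyref{thm:variation_characterization} specialized to $n=2$, with the incompressibility relation \eqref{eq:svd_incompressible}, $\sigma_2=\sigma_1^{-1}$, and the reciprocal chain rule converting the generalized minimum of $\sigma_1$ with respect to $\xi_1$ into a generalized maximum of $\sigma_2$ (the cross term dying precisely because the first-order Lie derivative vanishes). One small slip worth fixing: your parenthetical ``$\xi_1=\xi_n$ when $n=2$'' is false ($\xi_n=\xi_2$ in that case); the orthogonality $\xi_1(\mathbf{x}_1)\perp T_{\mathbf{x}_1}\mathcal{S}(t_1)$ follows directly from the definition of a forward stretch-surface, which requires normality to $\xi_1$ in any dimension, so no such identification is needed.
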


\prettyref{cor:variation_characterization} states that variational attracting
LCSs appear as curves of generalized maxima of the forward FTLE field,
just as the variational repelling LCS. The distinction between these
two types is via the direction field of differentiation. For attracting variational LCSs,
the direction of largest stretching is tangential to the curve, as
the direction of strongest attraction is normal to it. In contrast
to the topological ridge approaches such as the height ridge,
the variational approach, roughly speaking,
selects ridges according to their orientation with respect to the
directions of strongest attraction and repulsion, respectively.

\begin{example}
We apply the variational LCS approach to the saddle flow from \prettyref{sec:saddle}. In Fig.\ \ref{fig:nlin_saddle_genmax}, we show curves of generalized maxima (red) of the forward FTLE w.r.t.\  $\xi_2$ (red) and w.r.t.\ $\xi_1$ (green). Irrespective of the orientation of the visual FTLE ridge, (parts of) the $x$- and $y$-axes are correctly detected as repelling and attracting LCSs, respectively. The vertical curves of generalized maxima of $\sigma_2$ w.r.t.\ $\xi_2$ in Fig.\ \ref{fig:nlin_saddle_genmax} are discarded by checking for orthogonality with respect to $\xi_2$ (or, equivalently, for tangency with $\xi_1$). In general flows, this orthogonality requirement must be relaxed for numerical stability\cite{Farazmand2012}.
\end{example}

\begin{figure}
\centering
\includegraphics[width=0.6\columnwidth]{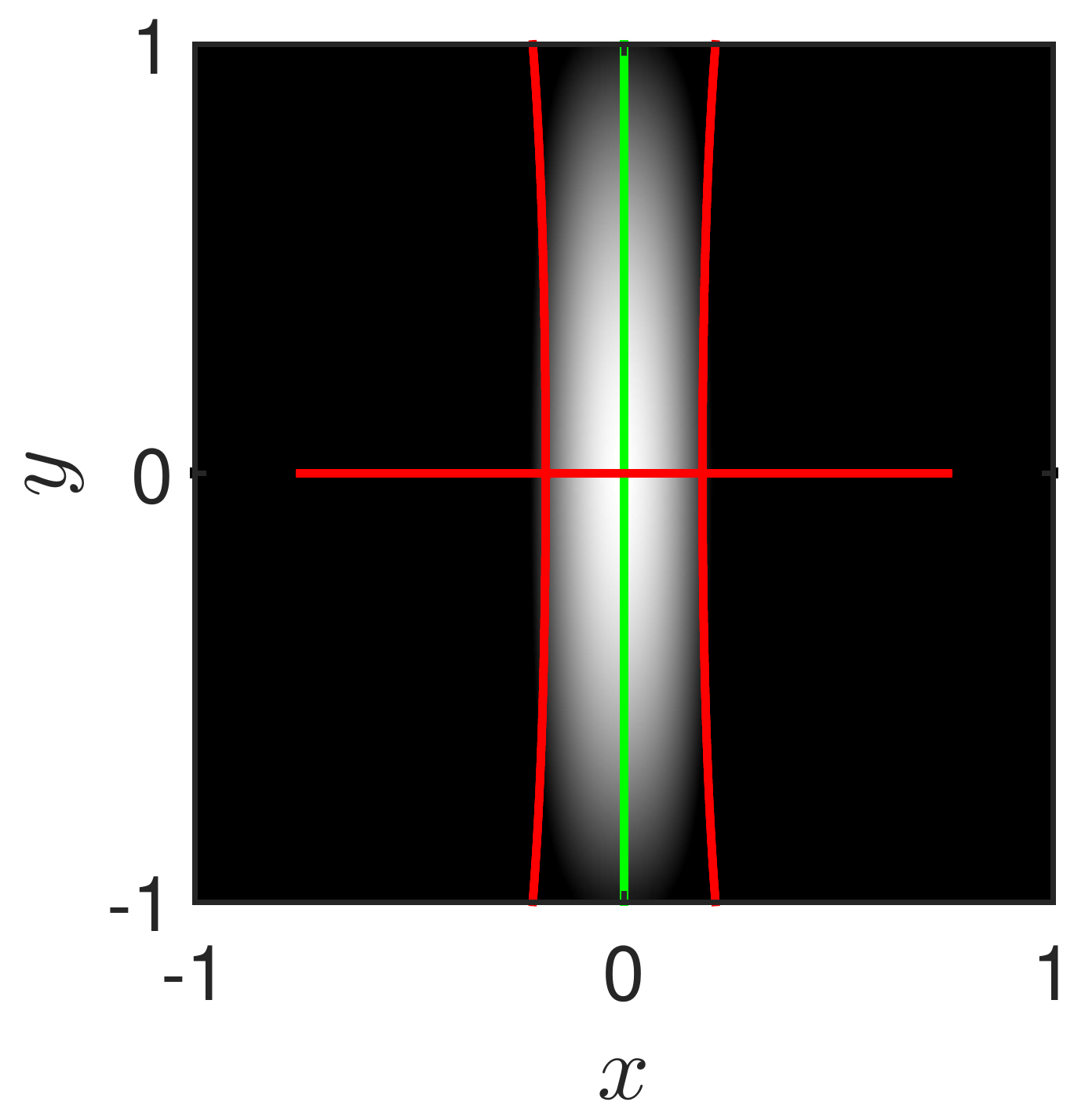}
\caption{Variational LCS candidates on top of the forward FTLE field from Fig.\ \ref{fig:nlin_saddle}(b) in gray scale. Generalized maxima of $\sigma_2$ w.r.t.\ $\xi_2$ (red), i.e., repelling LCS candidates; generalized maxima of $\sigma_2$ w.r.t.\ $\xi_1$ (green), i.e., attracting LCS candidates. Clearly, only normally attracting structures align with the visual FTLE ridge. For the full variational LCS definition, red/green lines shown here must be orthogonal to green/red lines in Fig.\ \ref{fig:nlin_saddle_geo}. Therefore, the two vertical red lines should be discarded.}
\label{fig:nlin_saddle_genmax}
\end{figure}

\section{Conclusions}

In this article, we showed that defining attracting and repelling LCSs
as the ridges of scalar separation measure fields, computed in backward and forward time,
respectively, is mathematically inconsistent. This adds to a collection of
previously discovered issues in FTLE ridge-based LCS detection \cite{Haller2002,Branicki2010,Haller2011,Norgard2012}.

The reason for that inconsistency is the fact that separation measures
reflect separation in an undirected fashion. The question, whether
or not a certain structure of interest, such as a ridge, aligns with
a certain direction field, can be answered only after the computation
of the separation measure field and after the selection of the
structure. In two-dimensional volume-preserving flows, the pointwise 
balance of particle separation and compression complicates the
situation further. In this case, normally attracting
material lines come along with large particle separation and may appear
as ridges in separation measure fields.

This conceptual issue cannot be resolved by refinements in ridge notions, more accurate
computations of separation measure fields, or improvements in
numerical ridge extraction, but only by more involved LCS concepts.
A natural extension of purely ridge-based LCS approaches is to
incorporate principal directions
in order to get a self-consistent notion of hyperbolic LCSs. In
the recently developed geodesic approach \cite{Haller2012,Farazmand2012a,Blazevski2014,Farazmand2014a,Karrasch2013c},
the principal direction fields are used to first define candidate
structures. Only in a second step those structures are selected,
which have the locally strongest intended dynamical impact. This
way, normal repulsion and normal attraction are guaranteed from the
beginning, while the second step guarantees the relevance for explaining
finite-time pattern formation and transport.

\begin{acknowledgments}
I would like to thank George Haller for support and
helpful comments. Furthermore,
I am grateful to Dan Bla\-zev\-ski, Florian Huhn, and especially 
Simon Eugster for fruitful discussions, to Joseph Lydon for proofreading 
the manuscript and to Mohammad Farazmand for sharing his strainline integration code. 
I am also thankful to Clancy Rowley for pointing out Ref.~\onlinecite{Trefethen1997}.
\end{acknowledgments}

\appendix

\section{Representation aspects in linearized deformation}\label{sec:representation}

In this section, we recall how to derive the metric representation of the deformation gradient, when $\mathcal{M}$ is embedded in, say, Euclidean three-space, and the metric on $\mathcal{M}$ is pulled back from that ambient space. As an example, we consider the two-dimensional sphere parametrized by geographical coordinates. Textbook references on the subject include Refs.~\onlinecite{Lee2012,Abraham1988}, see also Ref.~\onlinecite{Lekien2010} for aspects on deformation gradient estimation on triangulated manifolds.

\subsection{Preliminaries}

Let $\left(\mathbb{E},\left\langle \cdot,\cdot\right\rangle \right)$
be some $n$-dimensional Euclidean vector space, and $\mathcal{B}=\left(b_{1},\ldots,b_{n}\right)$
a basis. Then the matrix $G^{\mathcal{B}}=\left(g_{ij}\right)_{ij}\in\mathbb{R}^{n\times n}$,
$g_{ij}=\left\langle b_{i},b_{j}\right\rangle $, called the \emph{Gramian},
is the \emph{metric representing matrix}, which means the following.

Let $v\in\mathbb{E}$ and $v=\sum_{i=1}^{n}v_{i}b_{i}$ the unique
representation of $v$ w.r.t. the basis $\mathcal{B}$. Then the $n$-tuple $v^{\mathcal{B}}=\begin{pmatrix}v_{1} & \cdots & v_{n}\end{pmatrix}^{\top}$ is the \emph{coordinate representation} of $v$ w.r.t.\ $\mathcal{B}$.
Let $v,w\in\mathbb{E}$ with coordinate representations $v^{\mathcal{B}}$
and $w^{\mathcal{B}}$. Then the inner product of $v$ and $w$ is
given by
\[
\left\langle v,w\right\rangle =\left(v^{\mathcal{B}}\right)^{\top}\cdot G^{\mathcal{B}}\cdot w^{\mathcal{B}},
\]
where the right-hand side is meant in terms of the usual matrix-multiplication.
Note that the left-hand side is defined independently from any basis,
but the right-hand side gives a recipe how to actually compute the
inner product in coordinates.

The transformation given by the square root $\left(G^{\mathcal{B}}\right)^{\frac{1}{2}}$ of
$G^{\mathcal{B}}$ yields a basis $\mathcal{B}'=\left(b_{1}',\ldots,b_{n}'\right)$, $b_i'=\left(G^{\mathcal{B}}\right)^{\frac{1}{2}}b_{i}$, in which the inner product can be calculated in Euclidean fashion (recall that if a symmetric, (semi-)positive definite operator $G$ is given as $G=L^{\top}L$, then the square root of $G$ is also referred to as the \emph{modulus of
$L$}, and denoted by $\left|L\right|$). Indeed,
\begin{equation}
\begin{split}\left(v^{\mathcal{B}'}\right)^{\top}\cdot w^{\mathcal{B}'} &=\left(\left(G^{\mathcal{B}}\right)^{\frac{1}{2}}v^{\mathcal{B}}\right)^{\top}\cdot\left(G^{\mathcal{B}}\right)^{\frac{1}{2}}w^{\mathcal{B}}\\
&=\left(v^{\mathcal{B}}\right)^{\top}\cdot G^{\mathcal{B}}\cdot w^{\mathcal{B}}=\left\langle v,w\right\rangle.
\end{split}
\label{eq:metric_coord}
\end{equation}
Because of Eq. \eqref{eq:metric_coord}, we refer to $\left(G^{\mathcal{B}}\right)^{\frac{1}{2}}$
as the \emph{coordinate transformation to metric coordinates}.

\subsection{Pullback metrics}

For convenience, we consider in the following exclusively $\mathbb{E}=\mathbb{E}^{3}$, i.e.,
the Euclidean three-space with canonical inner product. We parametrize
$\mathbb{E}$ by the canonical embedding $\mathbb{R}^{3}\hookrightarrow\mathbb{E}$,
and we refer to the three coordinate directions as $x,y,z$. In these
coordinates, the Riemannian metric is represented by
\[
g_{\mathbb{E}}= 1\mathrm{d} x^{2}+1\mathrm{d} y^{2}+1\mathrm{d} z^{2},
\]
or, in other words, the metric representing matrix w.r.t.\ the canonical
orthonormal dual basis $\left(\mathrm{d} x,\mathrm{d} y,\mathrm{d} z\right)$ is the identity
matrix $I$ at each point.

Let $\mathcal{M}$ be an embedded submanifold of $\mathbb{E}$, and
assume for simplicity that (almost all of) $\mathcal{M}$ can be described
by a single parametrization $P\colon D\subseteq\mathbb{R}^{m}\to\mathcal{M}\hookrightarrow\mathbb{E}$,
$m\leq3$. Examples include the two-dimensional sphere $\mathcal{S}_R^{2}$ of radius $R$ in geographical coordinates, i.e., \emph{longitude} and \emph{latitude}, and a
three-dimensional spherical shell $\mathcal{S}^{2}\times\left[r,R\right]$,
$0<r<R$. For definiteness, we consider only two-dimensional manifolds in the following.

With the parametrization $P=(P^x,P^y,P^z)^\top$, depending on parameters $\mathbf{p}=(p_1,p_2)$, and given in terms of $(x,y,z)$-coordinates, the pushforward of vector fields in $\left(\partial_{p_1},\partial_{p_2}\right)$-$\left(\partial_{x},\partial_{y},\partial_{z}\right)$-coordinates by $P$, is represented by
\[
P_{*}(\mathbf{p})=P'(\mathbf{p})=
\begin{pmatrix}
\partial_{p_1}P^x(\mathbf{p}) & \partial_{p_2}P^x(\mathbf{p})\\
\partial_{p_1}P^y(\mathbf{p}) & \partial_{p_2}P^y(\mathbf{p})\\
\partial_{p_1}P^z(\mathbf{p}) & \partial_{p_2}P^z(\mathbf{p})
\end{pmatrix}.
\]
The pullback $P^{*}$ of differential forms by $P$ (in the dual $\left(\mathrm{d} x,\mathrm{d} y,\mathrm{d} z\right)$-$\left(\mathrm{d}p_1,\mathrm{d}p_2\right)$-co\-or\-di\-nates),
as the dual mapping to the pushforward, is then represented by $P^{*}(\mathbf{p})=P'(\mathbf{p})^\top$.

We turn $\mathcal{M}$ into a Riemannian manifold, by pulling back
the Euclidean metric $g_{\mathbb{E}}$ by $P$, i.e.\ $g_{\mathcal{M}}=P^{*}g_{\mathbb{E}}$.
To compute $g_{\mathcal{M}}$ in local coordinates, it suffices to compute the pullbacks of the canonical differential
forms, for instance, $P^*\mathrm{d}x$. Their coordinates correspond to the respective column in the
matrix representation of $P^{*}$. A closer inspection of the emerging coefficients reveals that
the metric representing matrix of $g_{\mathcal{M}}$ can be computed formally
as $(P_{*})^{\top}P_{*}$. Finally, the coordinate transformation to metric coordinates on the tangent space is given by $\left|P_{*}\right|$.

\subsection{Metric representations of deformation gradients}

Let $\mathbf{F}\colon \mathcal{D}\subseteq\mathcal{M}\to F[\mathcal{D}]\subseteq\mathcal{M}$ be the flow map as introduced in \prettyref{sec:Geometry}. Consider the velocity field $\mathbf{u}$ in Eq.\ \eqref{eq:ODE} to be given in parameters, say, in longitude-latitude on the sphere, and $\mathbf{F}$ to be computed in parameters as well. Then the Jacobian of $D\mathbf{F}$ in these coordinates is computed as
\[
D\mathbf{F}(\mathbf{x}_1)=
\begin{pmatrix}
\partial_{p_1}\mathbf{F}^{p_1}(\mathbf{x}_1) & \partial_{p_2}\mathbf{F}^{p_1}(\mathbf{x}_1)\\
\partial_{p_1}\mathbf{F}^{p_2}(\mathbf{x}_1) & \partial_{p_2}\mathbf{F}^{p_2}(\mathbf{x}_1)
\end{pmatrix}
\]

By the Courant--Fischer Min--Max Theorem\cite{Allaire2008}, singular values and vectors of $D\mathbf{F}$ can be characterized as solutions of min-max-problems of
\begin{equation}
\frac{\left\lVert D\mathbf{F}(\mathbf{x}_1)\mathbf{v}\right\rVert}{\left\lVert\mathbf{v}\right\rVert},\label{eq:deform}
\end{equation}
i.e., the deformation effect of $\mathbf{F}$ at $\mathbf{x}_1\in\mathcal{M}$ on a tangent vector $\mathbf{v}\in T_{\mathbf{x}_1}\mathcal{M}\setminus\left\{ 0\right\}$.
As usual, the norm $\lVert\mathbf{v}\rVert$ of a tangent vector $\mathbf{v}\in T_{\mathbf{x}_1}\mathcal{M}$ is
given by $g_{\mathcal{M},\mathbf{x}_1}(\mathbf{v},\mathbf{v})^{\frac{1}{2}}$. The question now is to which matrix representation to apply an \texttt{svd}-routine.

To this end, we transform the tangent vectors in Eq.\ \eqref{eq:deform} to metric coordinates, which yields:
\begin{multline}\label{eq:deform_metric}
\frac{\left\lVert D\mathbf{F}(\mathbf{x}_1)\mathbf{v}\right\rVert}{\left\lVert\mathbf{v}\right\rVert}
=\frac{\left\lVert\left|P_{*}(\mathbf{x}_2)\right|D\mathbf{F}(\mathbf{x}_1)\mathbf{v}\right\rVert}{\left\lVert\left|P_{*}(\mathbf{x}_1)\right|\mathbf{v}\right\rVert}\\
=\frac{\left\lVert\left|P_{*}(\mathbf{x}_2)\right|D\mathbf{F}(\mathbf{x}_1)\left|P_{*}(\mathbf{x}_1)\right|^{-1}\mathbf{w}\right\rVert_2}{\left\lVert\mathbf{w}\right\rVert_2},
\end{multline}
where $\left\lVert\cdot\right\rVert_{2}$ is the Euclidean norm of tuples. For
the second identity, we substituted $\mathbf{w}=\left|P_{*}(\mathbf{x}_1)\right|\mathbf{v}$, or,
equivalently, $\mathbf{v}=\left|P_{*}(\mathbf{x}_1)\right|^{-1}\mathbf{w}$. Eq.\ \eqref{eq:deform_metric} states that
\[
\left|P_{*}(\mathbf{x}_2)\right|D\mathbf{F}(\mathbf{x}_1)\left|P_{*}(\mathbf{x}_1)\right|^{-1}
\]
is the metric representation of the deformation gradient. This can be used as input for an \texttt{svd}-routine. Finally, right- and left-singular vectors thus obtained need to be transformed to natural coordinates by $\left|P_{*}(\mathbf{x}_1)\right|^{-1}$ and $\left|P_{*}(\mathbf{x}_2)\right|^{-1}$, respectively. After this step, they can be integrated to obtain strain- and stretchlines in the same parametrization as were given $\mathbf{u}$ and $\mathbf{F}$ initially.

\subsection{Example}

Consider the two-dimensional sphere of radius $R$, $\mathcal{S}^2_R$, without the poles, with geographical parametrization given by
\[
P\colon\left(\phi,\theta\right)\mapsto P(\phi,\theta)=\begin{pmatrix}P^{x}(\phi,\theta)\\
P^{y}(\phi,\theta)\\
P^{z}(\phi,\theta)
\end{pmatrix}=\begin{pmatrix}R\cos\theta\cos\phi\\
R\cos\theta\sin\phi\\
R\sin\theta
\end{pmatrix}.
\]
Then the pushforward of vector fields by $P$ (in $\left(\partial_{\phi},\partial_{\theta}\right)$-$\left(\partial_{x},\partial_{y},\partial_{z}\right)$-coordinates)
is represented by
\[
P_{*}(\phi,\theta)=P'(\phi,\theta)=R\begin{pmatrix}-\cos\theta\sin\phi & -\sin\theta\cos\phi\\
-\cos\theta\cos\phi & -\sin\theta\sin\phi\\
0 & \cos\theta
\end{pmatrix}.
\]
Thus, the metric representing matrix has the form
\[
G(\phi,\theta)=P'(\phi,\theta)^\top P'(\phi,\theta)=
\begin{pmatrix}R^{2}\cos^{2}\theta & 0\\
0 & R^{2}
\end{pmatrix}.
\]
The diagonal form is a consequence of the orthogonality of the geographical parametrization, the deviation from the identity matrix is due to the
lack of normalization. Clearly, the coordinate transformation $\left|P_{*}\right|$
to metric coordinates in the tangent space is then given by
\begin{align*}
\left|P_{*}(\phi,\theta)\right|&=\begin{pmatrix}R\cos\theta & 0\\
0 & R
\end{pmatrix},
\intertext{with inverse}
\left|P_{*}(\phi,\theta)\right|^{-1}&=\begin{pmatrix}\frac{1}{R\cos\theta} & 0\\
0 & \frac{1}{R}
\end{pmatrix}.
\end{align*}
Finally, the metric representation of the deformation gradient at $\mathbf{x}_1$, with coordinates $(\phi_1,\theta_1)$, where $\mathbf{x}_2=\mathbf{F}(\mathbf{x}_1)$, with coordinates $(\phi_2,\theta_2)$, takes the form
\begin{multline}
\left|P_{*}(\mathbf{x}_2)\right|D\mathbf{F}(\mathbf{x}_1)\left|P_{*}(\mathbf{x}_1)\right|^{-1}=\\
=\begin{pmatrix}
\frac{\cos\theta_{2}}{\cos\theta_{1}}\partial_{\phi}\mathbf{F}^{\phi}(\mathbf{p}) & \cos\theta_{2}\partial_{\theta}\mathbf{F}^{\phi}(\mathbf{p})\\
\frac{1}{\cos\theta_{1}}\partial_{\phi}\mathbf{F}^{\theta}(\mathbf{p}) &
\partial_{\theta}\mathbf{F}^{\theta}(\mathbf{p})
\end{pmatrix}.
\end{multline}


%

\end{document}